\documentclass[11pt,centertags]{amsart}
\usepackage{amssymb}            
\usepackage{mathrsfs}             
\usepackage{hyperref}           
\usepackage{ifsym}              
\usepackage{calc}               
\usepackage{enumerate}          
\usepackage[all]{xy}            
\usepackage{psfrag}             
\usepackage[usenames]{color}    
\usepackage{verbatim}           
\usepackage{fancyhdr}           
\usepackage{url}                
\usepackage{gloss}              
\usepackage{yhmath}             

\newif\ifPDF
\ifx\pdfoutput\undefined
    \PDFfalse
\else
    \ifnum\pdfoutput > 0
        \PDFtrue
    \else
        \PDFfalse
    \fi
\fi \ifPDF
    \usepackage[pdftex]{graphicx}
    \DeclareGraphicsExtensions{.pdf,.png,.jpg}
    \graphicspath{{}}
\else
    \usepackage{graphicx}
    \DeclareGraphicsExtensions{.eps}
    \graphicspath{{}}
\fi

\pagestyle{fancy} \typeout{:?1011}

\newtheorem*{main*}{Main Theorem}
\newtheorem{main}{Theorem}

\newtheorem{theorem}{Theorem}[section]
\newtheorem*{theorem*}{Theorem}
\newtheorem{proposition}[theorem]{Proposition}
\newtheorem{lemma}[theorem]{Lemma}
\newtheorem{corollary}[theorem]{Corollary}

\newtheorem*{question*}{Question}

\newtheorem*{conjecture*}{Conjecture}

\theoremstyle{definition}

\newtheorem{definition}[theorem]{Definition}
\newtheorem*{definition*}{Definition}

\theoremstyle{remark}

\numberwithin{equation}{section}

\pagestyle{headings}



\tolerance=300


%
%
%




\newcommand{\R}{\mathbb{R}}





 
 \DeclareMathOperator{\Sp}{\Sp}


\newcommand{\union}{\cup}
\newcommand{\intersect}{\cap}

\newcommand{\pa}{\partial }

\providecommand{\to}{\longrightarrow }

\newcommand{\inner}[1]{\left\langle #1 \right\rangle }


\newcommand{\sn}{\operatorname{sn}}



\def\[#1\]{\begin{align*}\begin{split} #1 \end{split}\end{align*} }
\def\[[#1\]]{\begin{align}\begin{split} #1 \end{split}\end{align} }


\renewcommand{\bar}{\overline}


\newcommand{\Fix}{\mathrm{Fix}}


\begin{document}
\author[Khek Lun Harold Chao]{Khek Lun Harold Chao}
\title[The rigidity of some finite group actions  on {CAT}($\kappa$) spaces]{The rigidity of some finite group actions on {CAT}($\kappa$) spaces}
\email{khchao@indiana.edu}
\address{Department of Mathematics, Indiana University, Bloomington, IN 47405, USA}
\begin{abstract}
In this paper, we first prove the optimal lower bound for  Alexandrov angle rigidity of torsion elliptic isometries
on any complete CAT($\kappa$) space, which, when attained, leads to an embedded 2-flat in the tangent cone invariant under the induced action of the isometry.  Next, we will prove similar result for action of symmetry groups  of either a regular orthoplex, a regular hypercube, a regular dodecahedron or a regular icosahedron  on a set of points in any complete CAT($\kappa$) space in a way corresponds to the set of vertices of the polytope, the angle made  at the circumcenter by any pair of points corresponding to an edge is bounded below by that of the edge in the polytope. As a result, we give a condition for the convex hull of the set of points to be isometric to a corresponding regular polytope in a model space of constant curvature $\kappa$.
\end{abstract}
\maketitle

\section{Introduction}
 In CAT($\kappa$) space elliptic isometries one usually deals with are those of finite order. For such an isometry, we would like to know what angle it turns a point that is not fixed. The simplest example is a rotation of order $n$ on a plane. In this case every point except the fixed point is turned by an angle $2\pi/n$, measured at the fixed point. For an orthogonal transformation with finite order in $\R^k$, one can show that  any non-fixed point is turned by an angle at least $2\pi/n$. For $n\geq 3$, this lower bound is achieved iff there is a 2-dimensional invariant subspace such that the action restricted on it is a rotation of angle $2\pi/n$. It is natural to expect the same for a general complete CAT(0) space, i.e. $2\pi/n$ is a lower bound for the angle.

It seems that the above question have received little attention so far. The only result which the author can find is the following estimate by Caprace and Monod using a very short and elementary argument: Let $X$ be a complete CAT(0) space, and $g$ be an elliptic isometry of finite order $n$ on $X$. For any
point $x$ not fixed by $g$, denote by $c$ the closest point to
$x$ in $\Fix(g)$. (Then $c$ has to be the circumcenter of the orbit of $x$.) In the middle of their proof of Alexandrov angle rigidity (\cite{CM1}, Proof of Proposition 6.8), they showed that $\angle_{c}(x, g\cdot x) \geq  1/n$. This lower bound is much smaller than the one we want. (However, it must be noted that the focus of that paper is not on metric geometric side but on group theoretic side, thus an optimal bound was not needed therein.)

In this paper, we will  show that $2\pi/n$ is a lower bound for a finite order elliptic isometry in a complete CAT(0) space and, more generally, in a complete CAT($\kappa$) space. We suppose $n \geq 3$, since for $n=2$ the inequality is trivial and is actually an equality.
\begin{main}\label{inequality}
Let $g$ be any elliptic isometry with finite order $n \geq 3$ on a complete CAT($\kappa$) space $X$, and $x$ be a point not fixed by $g$. If $\kappa >0$ assume that the orbit of $x$ by $g$ has radius less than $\pi/(2\sqrt \kappa)$. Then
$$\angle_{c}(x, g\cdot x) \geq \frac{2\pi}{n}$$
where $c$ is the circumcenter of the orbit of $x$.
\end{main}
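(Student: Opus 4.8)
The plan is to pass to the space of directions at the circumcenter, where the orbit becomes a closed equilateral configuration in a CAT($1$) space, and then to play the defining property of a circumcenter against a filling estimate for short closed curves.

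First I would record the elementary reductions. If $\theta := \angle_c(x, g\cdot x) \geq \pi$, then $n\theta \geq 3\pi > 2\pi$ and we are done, so assume $\theta < \pi$. Under the stated radius hypothesis the orbit $A = \{x, g\cdot x, \dots, g^{n-1}x\}$ has a \emph{unique} circumcenter, so $g$, which permutes $A$, must fix $c$; consequently all orbit points lie at a common distance $r$ from $c$, and all consecutive Alexandrov angles $\angle_c(g^i x, g^{i+1}x)$ equal $\theta$. (If the orbit has size $m$ dividing $n$, the argument below gives $\theta \geq 2\pi/m \geq 2\pi/n$, so we may assume $A$ consists of $n$ distinct points.)

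Next I would pass to $Y := \overline{\Sigma_c X}$, the completion of the space of directions at $c$, which is a complete CAT($1$) space. Writing $u_i$ for the direction of the geodesic $[c, g^i x]$, the angle metric gives $d_Y(u_i, u_{i+1}) = \theta$, and $g$ induces an isometry $g_*$ of $Y$ with $g_* u_i = u_{i+1}$; thus $u_0, \dots, u_{n-1}$ is a closed equilateral $n$-gon of perimeter $n\theta$ in $Y$. The key structural input is the circumcenter property: since $c$ minimizes $y \mapsto \max_i d(y, g^i x)$ and all these distances are equal, the first variation formula forbids any direction along which every distance $d(c, g^i x)$ strictly decreases. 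Concretely, there is no actual direction $w \in \Sigma_c X$ with $\angle_c(w, u_i) < \pi/2$ for all $i$; since $\Sigma_c X$ is dense in $Y$ and the $u_i$ are finite in number, it follows that no point of $Y$ lies within distance $< \pi/2$ of all of the $u_i$, i.e. the circumradius of $\{u_0, \dots, u_{n-1}\}$ in $Y$ is at least $\pi/2$.

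It then remains to prove the following filling estimate: in a complete CAT($1$) space, a finite cyclic configuration of perimeter $< 2\pi$ has circumradius $< \pi/2$. I would derive this from spherical Reshetnyak majorization: a closed curve of length $L < 2\pi$ in a CAT($1$) space is majorized by the boundary of a convex region $\overline{D} \subset S^2$ of the same perimeter via a $1$-Lipschitz map $\phi$; such a convex region lies in an open cap of radius $\rho < \pi/2$, and its spherical circumcenter $\overline{v}$ maps under $\phi$ to a point $v$ with $d_Y(v, u_i) \leq d_{S^2}(\overline{v}, \overline{u}_i) \leq \rho < \pi/2$. Combining with the previous paragraph by contraposition yields $n\theta \geq 2\pi$, which is the claim. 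The main obstacle is exactly this CAT($1$) filling estimate: invoking majorization depends essentially on the hypothesis $L < 2\pi$ and on a careful treatment of the elementary spherical convexity fact, and one must also control the point that the center $v$ produced lives a priori in the completion $Y$ rather than among honest directions — which is why the density argument is inserted before the first variation formula is applied.
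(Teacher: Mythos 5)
Your proposal is correct and takes essentially the same route as the paper's proof: pass to the completed space of directions at the circumcenter, apply Reshetnyak majorization to the closed polygon of perimeter $n\theta < 2\pi$, use the spherical fact that such a curve (or the convex region filling it) lies in an open hemisphere whose center lies in the region, and contradict the circumcenter property via the first variation formula, with density of $S_c(X)$ in its completion handling the same technicality the paper dispatches by ``slight modification if necessary.'' The only differences are organizational: you argue by contraposition with a finite-orbit version of the paper's Proposition \ref{circumcenter} (which the paper proves for general compact sets via convexity and compactness of $K$), and you take the cap center to be a spherical circumcenter rather than the midpoint of a halving chord as in the proof of Lemma \ref{short closed curve}.
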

Moreover, analogous to the situation in $\R^k$, we have an invariant 2-flat in the tangent cone when this bound is achieved.
\begin{main}\label{equality}
If $n \geq 3$ and  $\angle_c (x,g\cdot x) = 2\pi /n$, then the concatenation of the geodesic segments $[g^i \cdot \chi, g^{i+1} \cdot \chi]$ in $\overline{S_p(X)}$ forms an isometrically embedded circle of length $2\pi$, where $\chi$ is the direction of geodesic from $p$ to $x$. Also the tangent cone at $c$ contains a 2-flat on which $g$ induces a rotation of angle $2\pi/n$.
\end{main}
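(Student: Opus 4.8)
Throughout write $p=c$ for the circumcenter, so $\overline{S_p(X)}=\overline{S_c(X)}$, which is a complete CAT($1$) space. Since $c$ is the circumcenter of the orbit and $g$ permutes that orbit, uniqueness of the circumcenter forces $g\cdot c=c$, so $g$ induces an isometry $\hat{g}$ of $\overline{S_c(X)}$. Setting $\chi_i:=\hat{g}^{\,i}\chi$, the points $\chi_0,\dots,\chi_{n-1}$ are cyclically permuted by $\hat{g}$, and by invariance of the Alexandrov angle $d(\chi_i,\chi_{i+1})=\angle_c(g^i\cdot x,g^{i+1}\cdot x)=\angle_c(x,g\cdot x)=2\pi/n$. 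Thus the loop $L=\bigcup_i[\chi_i,\chi_{i+1}]$ has length exactly $2\pi$. The plan is to prove the circle statement first, and then read off the $2$-flat from it.

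For the reduction of the second assertion to the first, I would use the cone structure of the tangent cone: $T_cX$ is the Euclidean cone $C_0(\overline{S_c(X)})$ over the completion of the space of directions, and it is CAT($0$). An isometrically embedded round circle of length $2\pi$ in $\overline{S_c(X)}$ spans a sub-cone isometric to the Euclidean cone over a circle of circumference $2\pi$, i.e. to flat $\R^2$; since the circle is isometrically embedded (distances up to $\pi$ are realized), this plane embeds isometrically and convexly through the apex, giving a $2$-flat. The isometry $\hat{g}$ preserves the circle via the cyclic shift $\chi_i\mapsto\chi_{i+1}$, which on a round circle of length $2\pi$ is rotation by $2\pi/n$; hence $\hat{g}$ acts on the $2$-flat as the linear rotation of angle $2\pi/n$. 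So everything hinges on the circle statement.

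For the circle statement I would argue as follows. As in Theorem~\ref{inequality}, the set $\{\chi_i\}$ cannot be contained in any open ball of radius $\pi/2$: otherwise the center would be a strict descent direction for $\zeta\mapsto\max_i d(\zeta,g^i\cdot x)$ by the first variation formula, contradicting that $c$ is the circumcenter. Hence $\{\chi_i\}$ has circumradius $\ge\pi/2$ in $\overline{S_c(X)}$. Because $\hat{g}$ acts transitively on the vertices, the angle of $L$ at each $\chi_i$ takes a common value $\beta\in(0,\pi]$. The crux is to show $\beta=\pi$, i.e. that $L$ is a closed geodesic. The natural attempt is to suppose $\beta<\pi$ and cut each corner $\hat{g}$-symmetrically, replacing the two half-edges meeting at $\chi_i$ by the geodesic joining their endpoints; this produces a $\hat{g}$-invariant loop of length strictly less than $2\pi$. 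Such a loop has a circumcenter of circumradius $<\pi/2$, which is $\hat{g}$-invariant by uniqueness, and one then wants to contradict the bound $\ge\pi/2$ for $\{\chi_i\}$. Once $\beta=\pi$ is known, the standard rigidity of the $2\pi$-bound for closed geodesics in CAT($1$) spaces identifies $L$ with an isometrically embedded round circle of length $2\pi$.

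The main obstacle is precisely this borderline (equality) analysis. The clean circumcenter lemma yields a strict radius bound only for loops of length strictly below $2\pi$, and letting the corner-cuts shrink recovers only $d(\,\cdot\,,\chi_i)\le\pi/2$ rather than the strict inequality a contradiction requires; making the estimate quantitative, so that a fixed deficit $\pi-\beta>0$ forces a definite drop of the circumradius, is delicate. I expect the cleanest resolution handles local geodesy and the final rigidity at once via Reshetnyak's majorization theorem: the length-$2\pi$ loop $L$ is majorized by a convex region of $S^2$ by a distance-nonincreasing, boundary-arclength-preserving map. The circumradius bound $\ge\pi/2$, together with the order-$n\ge 3$ rotational symmetry $\hat{g}$ (which excludes the degenerate lune majorizers of perimeter $2\pi$), forces the majorizing region to be a round hemisphere; the equality case of majorization then identifies $L$ isometrically with its bounding great circle, delivering the isometric embedding directly.
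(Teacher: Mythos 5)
Your setup matches the paper's: the induced isometry $\hat g$ of the complete CAT(1) space $\overline{S_c(X)}$, the loop $L$ of length exactly $2\pi$ with vertices $\chi_i$, the circumradius bound $\ge \pi/2$ for $\{\chi_i\}$ coming from Proposition~\ref{circumcenter}, and the reduction of the $2$-flat statement to the circle statement via the Euclidean cone are all correct and are essentially what the paper does. The gap is exactly where you place it, and neither of your proposed mechanisms closes it. The corner-cutting argument, as you concede, degenerates to non-strict inequalities. The majorization fallback fails for a reason more fundamental than the borderline length (the theorem quoted in the paper requires length strictly less than $2\pi$): a majorizing map is merely distance non-increasing, so it yields only \emph{upper} bounds on distances between points of $L$, whereas an isometric embedding of the circle requires matching \emph{lower} bounds, and no ``equality case of majorization'' supplies them. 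Concretely, in a metric tree (a CAT(1) space) the loop that runs from a branch point $c$ out a distance $\pi/2$ along one branch, back to $c$, out $\pi/2$ along a second branch, and back, has length $2\pi$ and circumradius $\pi/2$, and it \emph{is} majorized by a round hemisphere (fold the hemisphere along the meridian whose endpoints correspond to the two passages through $c$), yet it is nowhere near an embedded circle. So ``hemisphere majorant plus circumradius $\ge\pi/2$'' cannot deliver the rigidity; the order-$n\ge 3$ symmetry has to enter quantitatively, and your sketch does not show how.

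The paper's proof supplies precisely the missing device. Suppose some interior point $m\in[\chi,\hat g\cdot\chi]$ has $d(m,\hat g\cdot m)<2\pi/n$. Then the orbit loop of $m$ has length $<2\pi$, so the strict majorization theorem and Lemma~\ref{short closed curve} apply and give a unique, $\hat g$-fixed circumcenter $q$ with $d(q,m)<\pi/2$. By Proposition~\ref{circumcenter}, $d(q,\chi)\ge\pi/2$, so one can choose $\chi'\in[\chi,m]$ with $d(q,\chi')=\pi/2$ \emph{exactly}. Now Theorem~\ref{inequality} is applied \emph{recursively}, inside the CAT(1) space $\overline{S_c(X)}$, to $\hat g$ and the point $m$, giving $\angle_q(m,\hat g\cdot m)\ge 2\pi/n$ and hence, by $\hat g$-invariance and the triangle inequality for angles, $\angle_q(\chi',m)+\angle_q(m,\hat g\cdot\chi')\ge 2\pi/n$. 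Gluing the spherical comparison triangles of $(q,\chi',m)$ and $(q,m,\hat g\cdot\chi')$ along $[\bar q,\bar m]$, the strict bound $d(\bar q,\bar m)<\pi/2$ forces $d(\bar\chi',\overline{\hat g\cdot\chi'})<d(\chi',m)+d(m,\hat g\cdot\chi')\le 2\pi/n$, while $d(\bar q,\bar\chi')=d(\bar q,\overline{\hat g\cdot\chi'})=\pi/2$ makes the angle at $\bar q$ equal to that distance, contradicting the angle lower bound. (The paper itself notes that running this with $\chi$ instead of $\chi'$ fails, which is exactly the delicacy you ran into.) Once every such $m$ satisfies $d(m,\hat g\cdot m)=2\pi/n$, the loop is a closed local geodesic of length $2\pi$ in a CAT(1) space, hence an isometrically embedded circle, and your cone argument finishes the theorem. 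That recursive use of Theorem~\ref{inequality} together with the auxiliary point $\chi'$ at distance exactly $\pi/2$ is the idea your proposal would need to supply.
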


Using these results and the technique in \cite{LangSchroeder}, we will prove inequalities of Alexandrov angles similar to the one above for the symmetry groups of  Platonic solids, regular orthoplex and regular hypercube acting on a set of vertices in any complete CAT($\kappa$) space. The case of the regular  $n$-simplex is actually a special case of Theorem A in \cite{LangSchroeder}.

\begin{main}\label{polyhedron}
Let $W$ be a regular Platonic solid or a regular hypercube or a regular orthoplex with vertices $\bar{x_i}$, and $G$ be the (orientation preserving) symmetric group of $W$. Suppose that $G$ acts on a CAT($\kappa$) space $X$ by isometries.  Let $x_i$ be points in $X$ on which the induced action of $G$ is equivariant with that on $\bar {x_i}$. Suppose that the set of points $\{x_i\}$ has radius less than $\pi/(2\kappa)$ if $\kappa>0$. For any edge of $W$ with endpoints $\bar {x_i}$ and $\bar {x_j}$, the Alexandrov angle $\angle_c(x_i, x_j)$ is no less than the corresponding (Euclidean) angle $\angle_{\bar c}(\bar {x_i}, \bar {x_j})$, where $c$ and $\bar c$ are the circumcenters of the set $\{x_i\}$ and $W$ respectively. As a consequence, we have $\sn_\kappa a/2 \ge \sn_\kappa r \sin(\alpha/2)$, and if equality holds, then the convex hull of $\bar{x_i}$ is isometric to a corresponding regular polytope in a model space of constant curvature $\kappa$.
\end{main}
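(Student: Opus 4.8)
The plan is to push everything into the space of directions at the circumcenter $c$, where the problem becomes a comparison between the configuration coming from $X$ and the round model configuration that is \emph{uniform in $\kappa$}, and to feed the rotations of $G$ into Main Theorem~\ref{inequality}. First I would record the reductions. Since $G$ permutes $\{x_i\}$ and the circumcenter of a set of radius $<\pi/(2\sqrt\kappa)$ is unique, $G$ fixes $c$; hence $G$ acts by isometries on the completed space of directions $\overline{S_c(X)}$, which is CAT($1$) regardless of the sign of $\kappa$, and the map sending the direction $\chi_i$ of $[c,x_i]$ to the model direction $\bar u_i=(\bar x_i-\bar c)/|\bar x_i-\bar c|\in S^{k-1}$ is $G$-equivariant. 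Because $\angle_c(x_i,x_j)=d_{\overline{S_c(X)}}(\chi_i,\chi_j)$ and $\angle_{\bar c}(\bar x_i,\bar x_j)=d_{S^{k-1}}(\bar u_i,\bar u_j)$, the theorem is equivalent to the \emph{single} $\kappa$-independent statement that edge distances do not shrink under $\chi_i\mapsto\bar u_i$. By edge-transitivity of $G$ it suffices to treat one edge, and vertex-transitivity gives that the $x_i$ are equidistant from $c$, so $\{\chi_i\}$ is one $G$-orbit which, by the circumcenter (balancing) condition, lies in no open ball of radius $\pi/2$ in $\overline{S_c(X)}$.

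Next I would extract genuine \emph{lower} bounds on Alexandrov angles from the rotations — this is essential, since metric comparison in CAT($\kappa$) only ever bounds such angles from above. Each edge lies on a $2$-face of $W$, a regular $m$-gon that is the orbit of a cyclic rotation $g\in G$; applying Main Theorem~\ref{inequality} to $g$ (using the radius hypothesis when $\kappa>0$) gives $\angle_{c'}(x_i,gx_i)\ge 2\pi/m$ at the orbit circumcenter $c'$, and for triangular faces this is exactly the equidistant case that \cite{LangSchroeder} treats directly. The point is that every edge is realised as a pair $(x_i,gx_i)$ consecutive in such a face, so Main Theorem~\ref{inequality} controls it — but only at $c'$, which lies on the rotation axis $\Fix(g)$ and is in general distinct from the global circumcenter $c$.

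The main obstacle is precisely this transfer from the axis to $c$. It suffices to prove that the balanced orbit $\{\chi_i\}$ in the CAT($1$) space $\overline{S_c(X)}$ is at least as spread out on edges as the round model $\{\bar u_i\}$; equivalently, to produce a $G$-equivariant distance-nonincreasing map $\Phi\colon\overline{S_c(X)}\to S^{k-1}$ with $\Phi(\chi_i)=\bar u_i$. The honest difficulty is that the bounded-curvature extension theorems run the wrong way here (the source $\overline{S_c(X)}$ is curved above, not below), so rather than extend a Lipschitz map out of it I would argue through the center/balancing comparison that underlies the equidistant case of \cite{LangSchroeder}: the balancing condition at $c$ is what converts the per-face bounds of the previous paragraph, together with the equivariance, into the global edge bound. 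I expect the technical heart to be controlling the pairs $\chi_i,\chi_j$ whose defining faces are not coplanar, so that the comparison is genuinely global rather than face-local; the non-completeness of the edge graph (the feature absent in the simplex) is exactly what forces one to glue the face-wise information rather than quote \cite{LangSchroeder} verbatim.

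Finally, I would treat equality by feeding $\angle_c(x_i,x_j)=\angle_{\bar c}(\bar x_i,\bar x_j)$ back into Main Theorem~\ref{equality}. Equality propagates to saturate each face bound, so each face rotation $g$ produces an isometrically embedded circle in $\overline{S_c(X)}$ and a $2$-flat in the tangent cone on which $g$ rotates by $2\pi/m$; assembling these $2$-flats around $c$ should yield an isometric copy of the round $S^{k-1}$ inside $\overline{S_c(X)}$, equivalently a flat $\R^k$ in the tangent cone carrying the model $G$-action, so that the configuration is the Euclidean one. Throughout, the only role of $\kappa$ is in the radius hypothesis and in the invocation of Main Theorem~\ref{inequality}; the comparison itself takes place in the $\kappa$-independent CAT($1$) world of directions.
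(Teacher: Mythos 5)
Your high-level skeleton --- fix $c$, work $G$-equivariantly in $\overline{S_c(X)}$ and its tangent cone, extract lower bounds from cyclic rotations via Theorem \ref{inequality}, and close with a balancing condition at the circumcenter --- is the same as the paper's, but the step you yourself flag as the ``main obstacle'' (transferring the bound $\angle_{c'}(x_i,g\cdot x_i)\ge 2\pi/m$ from a face circumcenter $c'$ to the global circumcenter $c$) is left genuinely unresolved: your first idea, a $G$-equivariant $1$-Lipschitz map $\Phi\colon\overline{S_c(X)}\to S^{k-1}$, is (as you admit) not available, and your fallback is only a gesture at ``the balancing comparison that underlies \cite{LangSchroeder}'' without a mechanism. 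The paper closes this gap with two concrete devices. First, Theorem \ref{inequality} is converted into a \emph{circumcenter-free} chord inequality, Corollary \ref{polyineq}: if a cyclic group of order $n\ge 4$ acts by isometries on a complete CAT(0) space, then $d(g^2\cdot x,x)\le 2\cos(\pi/n)\,d(g\cdot x,x)$. Because no basepoint appears in this statement, it can be applied to the orbit of $v_i$ under any cyclic subgroup (vertex stabilizers $C_5$, square $2$-faces, etc.) \emph{inside the tangent cone} $C_0(\overline{S_c(X)})$, which is complete CAT(0); no transfer between circumcenters is ever needed. Second, the balancing condition at $c$ is made quantitative: since the origin $o$ is the circumcenter of the unit vectors $v_i$, it lies in the closure of their convex hull, and averaging the Lang--Schroeder correspondence over $G$ and invoking Proposition \ref{LSineq} yields $\sum_{i,j}\langle v_i,v_j\rangle\le 0$, which is inequality \ref{ineq}.

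With $\|v_i\|=1$ one has $\langle v_i,v_j\rangle=1-\tfrac12 d(v_i,v_j)^2$, so inequality \ref{ineq} becomes a single constraint on chord lengths, decomposed into $G$-orbits of chords; the non-edge chords are then bounded above in terms of the edge chord $a_1$ by Corollary \ref{polyineq} (pentagons for the icosahedron and dodecahedron), the parallelogram inequality (dodecahedron), an induction on dimension (hypercube), or trivially by $\cos\ge -1$ (antipodal pairs), and the constraint forces $a_1$ to be at least the Euclidean edge length, i.e.\ the edge angle at $c$ to be at least the Euclidean one. This quantitative chain is exactly what is missing from your sketch: identifying the obstacle is not the same as overcoming it. Two smaller points: for the icosahedron the $2$-faces are triangles, so your face-rotation bound $2\pi/3$ is too weak to be of use --- the paper instead exploits the pentagonal orbit of a vertex stabilizer; and the equality analysis in your final paragraph addresses a claim the theorem does not make, so it is not needed.
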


 The main tool of \cite{LangSchroeder} is a generalized ``scalar product'' defined on a  complete CAT(0) 0-cone , of which the tangent cone at a point in a  CAT($\kappa$) space is an example. The CAT(0) inequality makes this scalar product concave, which is a key property that enables us to derive the lower bound inequality. We will also need an inequality that results from the lower bound for  elliptic isometries  in the first part, which gives relationships between distances of different pairs of points in the set $\{x_i\}$, to reduce the inequality obtained from the scalar product calculation to only one variable.

\section{Proof of the lower bound}

We recall the definition of
space of directions (\cite{BH} Def. II.3.18).
\begin{definition}
For any $p \in X$, consider all the non-trivial geodesics issuing
from $p$. Define an equivalent relation on the set by $\gamma \sim \gamma'$
iff the Alexandrov angle between them $\angle_p(\gamma,\gamma')=0$. The
equivalent classes under this relation form a metric space with
Alexandrov angle $\angle(\cdot, \cdot)$ as the metric. This metric
space is the \textit{space of directions} at $p$ and is  denoted as $S_p(X)$.
\end{definition}

By a theorem of Nikolaev (\cite{BH} Theorem II.3.19), as long as $X$ is a metric space with curvature bounded above by some $\kappa$, then the completion of $S_p(X)$ is a CAT(1) space.

The following is a simple lemma. We include a proof which will be referred to later.
\begin{lemma}\label{short closed curve}
Let $\alpha$ be a closed curve on the unit 2-sphere. If $\alpha$ is
shorter than 2$\pi$, then there is a point $m$ on the sphere such
that $\alpha$ is contained in the hemisphere $B(m,\pi/2)$, where the
metric is the angle metric on the sphere.
\end{lemma}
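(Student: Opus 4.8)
The plan is to locate the center $m$ of the hemisphere explicitly and then rule out any point of $\alpha$ lying at distance $\ge \pi/2$ from it. First I would parametrize $\alpha$ by arc length and choose two points $p, q \in \alpha$ that divide it into two subarcs of equal length $L/2$, where $L < 2\pi$ is the total length. Since each subarc joins $p$ to $q$, we have $d(p,q) \le L/2 < \pi$, so $p$ and $q$ are not antipodal and the minimal geodesic $[p,q]$ has a well-defined midpoint $m$; viewing the sphere inside $\R^3$ this is $m = (p+q)/\|p+q\|$. I claim that this $m$ is the desired center.

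To verify the claim I would argue by contradiction: suppose some $z \in \alpha$ satisfies $d(m, z) \ge \pi/2$. The point $z$ lies on one of the two subarcs, say $\beta$, whose length is $L/2$; since $\beta$ runs from $p$ through $z$ to $q$, its length is at least $d(p,z) + d(z,q)$. It therefore suffices to show that $d(m,z) \ge \pi/2$ forces $d(p,z) + d(z,q) \ge \pi$, for this yields $L/2 \ge \pi$, contradicting $L < 2\pi$.

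The remaining point is this elementary spherical estimate, which I would reduce to a one-variable trigonometric inequality. Because $m$ is a positive multiple of $p + q$, the condition $d(m,z) \ge \pi/2$ is equivalent to $\langle m, z\rangle \le 0$, i.e. to $\cos d(p,z) + \cos d(q,z) \le 0$. Writing $a = d(p,z)$ and $b = d(q,z)$, both in $[0,\pi]$, and using $\cos a + \cos b = 2\cos\tfrac{a+b}{2}\cos\tfrac{a-b}{2}$ together with $\cos\tfrac{a-b}{2} \ge 0$, one obtains $\cos\tfrac{a+b}{2} \le 0$, hence $a + b \ge \pi$, which is exactly what is needed.

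I expect the only real subtlety — the main obstacle — to be the bookkeeping that makes the midpoint the correct center: one must choose $p, q$ so that each arc has length $< \pi$ (the equal-length split does precisely this, whichever arc contains $z$) and check that $m$ is well defined, i.e. that $p$ and $q$ are never antipodal. Once the center is pinned down as $m = (p+q)/\|p+q\|$, translating ``$z$ lies outside the open hemisphere $B(m,\pi/2)$'' into the inner-product inequality $\cos d(p,z) + \cos d(q,z) \le 0$ is the crux, and the concluding trigonometric step is routine. I would also note that this argument records, for later use, both the explicit center $m$ and the fact that the obstruction is localized on a single subarc of length $L/2$.
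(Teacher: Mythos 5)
Your proof is correct, and it opens exactly as the paper's does: bisect $\alpha$ at two points $p,q$ into arcs of length $L/2<\pi$, note $d(p,q)<\pi$, and take $m$ to be the midpoint of the minimizing geodesic $[p,q]$. Where you genuinely diverge is in verifying that this $m$ works. The paper argues intrinsically: if $\alpha$ escapes $B(m,\pi/2)$, continuity yields a point $z\in\alpha$ with $d(z,m)$ \emph{exactly} $\pi/2$; rotating the sphere by $\pi$ about the axis through $m$ carries the half $\alpha_1$ containing $z$ to a curve $\alpha_1'$, and $\alpha_1\cup\alpha_1'$ contains a path of length $L/2<\pi$ from $z$ to its image $z'$, while $d(z,z')=2d(z,m)=\pi$ --- a contradiction (the classical rotation trick). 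You instead work extrinsically in $\R^3$: for any $z$ with $d(m,z)\ge\pi/2$, since $m$ is a positive multiple of $p+q$ you get $\cos d(p,z)+\cos d(q,z)=\langle p+q,z\rangle\le 0$, and the sum-to-product identity forces $d(p,z)+d(q,z)\ge\pi$, so the half-arc through $z$ already has length at least $\pi$. Your route buys a purely computational finish: no intermediate-value selection of $z$, no rotation, and in fact a slightly stronger chordal statement (every point outside the open hemisphere has distance-sum at least $\pi$ to $p$ and $q$); the paper's argument buys independence from the ambient embedding, being entirely synthetic on the sphere. One small point to patch: when $\cos\tfrac{a-b}{2}=0$, i.e. $\{a,b\}=\{0,\pi\}$, you cannot conclude $\cos\tfrac{a+b}{2}\le 0$ from the product, but then $a+b=\pi$ holds outright, so the conclusion survives. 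Finally, note that the paper later invokes not just this lemma but its proof: in Theorem I it needs that the center lies in the convex region majorizing the curve. Since your $m$ is the very same point (the midpoint of the chord $[p,q]$, hence on a geodesic between two points of the curve), your proof supports that downstream use equally well.
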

\begin{proof}
Take two points $x, x'$ on the curve $\alpha$ that divide it into two curves of equal length. As $d(x,x') < \pi$,  there is a unique geodesic segment joining $x$ and $x'$, and we let $m$ be the midpoint of this segment. Then $m$ satisfies the desired condition. Otherwise, there would be a point $z$ on curve $\alpha$ with $d(z,m)= \pi/2$. Call the subcurve of $\alpha$ on which $z$ lies as  $\alpha_1$. Rotate the sphere for an angle $\pi$ about the axis through $m$, and let $z'$, $\alpha'_1$ be the rotated images of $z$ and $\alpha_1$. Now $d(z,z')=2d(z,m) = \pi$, but there is a path on $\alpha_1 \union \alpha'_1$ joining $z$ to $z'$ with length equal that of $\alpha_1$, which is shorter than $\pi$, a contradiction.
\end{proof}

The main ingredient of our proof is the majorization theorem of
Reshetnyak.

\begin{theorem}[Reshetnyak majorization theorem \cite{AlexGeom}]
Any closed curve $\alpha$ in a CAT($\kappa$) space $U$ with length less than $2\pi / \sqrt \kappa$ is majorized by a convex region $D$ in $M^2(\kappa)$, i.e. there is a distance non-increasing map from $D$ to $U$ such that its restriction to the boundary $\pa D$ is mapped to $\alpha$ preserving the length.
\end{theorem}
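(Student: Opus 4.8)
The plan is to reduce the statement to the case of a closed geodesic polygon and then to assemble the majorizing region out of comparison triangles, using Alexandrov's lemma to keep the region convex and the map short. For the \emph{reduction to polygons}, given a rectifiable closed curve $\alpha$ of length $L<2\pi/\sqrt\kappa$ I would choose a fine subdivision $q_0,q_1,\dots,q_m=q_0$ of $\alpha$ and let $\alpha_m$ be the closed geodesic polygon with these vertices; the sides are the (unique, once the mesh is small) geodesics of $U$ joining consecutive $q_i$, which exist because short segments in a CAT($\kappa$) space lie in a convex ball. As the mesh tends to $0$ the perimeter of $\alpha_m$ increases to $L$, so it stays below $2\pi/\sqrt\kappa$. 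Assuming each $\alpha_m$ is majorized by a convex region $D_m\subset M^2(\kappa)$ through a $1$-Lipschitz map $f_m$ preserving boundary length, the perimeters of the $D_m$ are bounded by $L$; for $\kappa>0$ a convex region of perimeter below $2\pi/\sqrt\kappa$ lies in an open hemisphere of radius $1/\sqrt\kappa$, so the $D_m$ have uniformly bounded diameter. By the Blaschke selection theorem a subsequence of the $D_m$ converges to a convex region $D$, and since the $f_m$ are uniformly $1$-Lipschitz an Arzel\`a--Ascoli argument gives a $1$-Lipschitz limit $f\colon D\to U$ whose boundary restriction parametrizes $\alpha$ by arclength. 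Hence it suffices to majorize geodesic polygons.

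For the \emph{base case}, a single geodesic triangle $\triangle(A;B,C)$ in $U$ is majorized by its comparison triangle $\bar\triangle\subset M^2(\kappa)$. I would define the map by the geodesic fan from $\bar A$: a point $\bar p\in\bar\triangle$ lies on a unique segment $[\bar A,\bar q]$ with $\bar q\in[\bar B,\bar C]$, and I send it to the point of $[A,q]$ at the same distance from $A$, where $q\in[B,C]$ corresponds to $\bar q$ under the arclength identification of the opposite sides. Repeated use of the CAT($\kappa$) comparison for the subtriangles $\triangle(A;B,q)$ shows this fan map is distance non-increasing, and it preserves $\pa\bar\triangle$ isometrically by construction.

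For the \emph{inductive gluing}, writing the polygon as $P=[p_0p_1\cdots p_n]$, I would cut off the triangle $\triangle(p_0;p_1,p_2)$ along the geodesic diagonal $[p_0,p_2]$, leaving the polygon $P'=[p_0p_2p_3\cdots p_n]$ with one fewer side. By induction $P'$ is majorized by a convex region $\bar D'$ and the triangle by its comparison triangle $T$; I glue $T$ to $\bar D'$ along the edges corresponding to $[p_0,p_2]$ and glue the two maps, which agree on the seam because both send it to the geodesic $[p_0,p_2]$. The glued map preserves the new boundary length, so everything reduces to showing that the glued region can be taken convex with the glued map still distance non-increasing.

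This last point is the \emph{main obstacle}, because the development of comparison triangles need not be convex: at the seam vertices $\bar p_0,\bar p_2$ the interior angle of the union is the sum of the comparison angle of $T$ and the corresponding angle of $\bar D'$, and nothing forces it below $\pi$. Alexandrov's lemma (\cite{BH}, I.2.16) is exactly the tool for this configuration: it compares a pair of model triangles glued along a common side with the single model triangle obtained by straightening the broken diagonal into one chord, and it shows that straightening a reflex seam does not increase the distances in $U$ between the corresponding points, so composing the majorizing map with this flattening keeps it $1$-Lipschitz while the flattened region gains convexity at that vertex. Iterating the convexification at each non-convex seam, with the bound $L<2\pi/\sqrt\kappa$ ensuring that for $\kappa>0$ every intermediate configuration stays in an open hemisphere and no antipodal degeneration occurs, produces an honestly convex region carrying a short majorizing map. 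Together with the two previous steps and the limit passage, this proves the theorem.
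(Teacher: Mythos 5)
First, a point of order: the paper does not prove this statement at all --- it is quoted as a known theorem from \cite{AlexGeom} and used as a black box, so there is no internal proof to compare against. Measured against the standard proof in the literature (Reshetnyak's original argument, as presented in \cite{AlexGeom}), your architecture is the right one: approximate $\alpha$ by inscribed geodesic polygons (chords are no longer than arcs, so the perimeters stay below $2\pi/\sqrt\kappa$), majorize a triangle by its comparison triangle, induct by cutting along a diagonal $[p_0,p_2]$ (for $\kappa>0$ this geodesic exists because the smaller of the two boundary arcs between $p_0$ and $p_2$ has length less than $\pi/\sqrt\kappa$), and restore convexity of the glued model region by repeated application of Alexandrov's lemma (\cite{BH} I.2.16), with the length bound keeping everything inside a hemisphere when $\kappa>0$. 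That is exactly how the theorem is proved.

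There are, however, two genuine gaps. The serious one is the limit passage: Arzel\`a--Ascoli requires the sequences $\{f_m(x)\}$ to be pointwise precompact, and in a general complete CAT($\kappa$) space closed bounded sets need not be compact (an infinite-dimensional Hilbert space is already a complete CAT(0) space), so your argument as written only proves the theorem for \emph{proper} $U$. The standard repair is to pass to an ultralimit: the $\omega$-limit of the maps $f_m$ is a short map $f\colon D\to U^{\omega}$ into the ultrapower, which is again CAT($\kappa$); since $U$ is complete with unique geodesics at the relevant scale, its canonical copy in $U^{\omega}$ is closed and convex, and the nearest-point projection $U^{\omega}\to U$ is short and fixes $\alpha$ pointwise, so composing gives the majorization in $U$. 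The second, smaller slip is in your base case: the fan map ``send $\bar p\in[\bar A,\bar q]$ to the point of $[A,q]$ at the same distance from $A$'' is ill-defined, because the CAT($\kappa$) inequality gives $d(\bar A,\bar q)\geq d(A,q)$, so the model ray is generally \emph{longer} than its target; one must parametrize proportionally, and then the $1$-Lipschitz property across two rays $[A,q]$, $[A,q']$ is not a direct ``repeated use of the CAT($\kappa$) comparison'' --- the usual route is to prove the triangle case by the same diagonal-straightening argument (or the line-of-sight estimates of \cite{AlexGeom}), since the model subtriangle $\triangle(\bar A,\bar q,\bar q')$ has all three sides at least those of $\triangle(A,q,q')$ and an additional monotonicity argument is needed. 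One should also note that the convexification loop requires an argument that it terminates (straightening one seam vertex can create a reflex angle elsewhere); this is handled in \cite{AlexGeom} by building the convexification into the induction rather than iterating blindly. With these repairs your outline becomes the standard proof.
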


For $\kappa\leq 0$, define a map $\exp^{-1}_p : X\setminus\{p\} \to \overline{S_p(X)}$ which maps every point $x$ different from $p$ to the direction represented by the geodesic segment $[p, x]$; for $\kappa > 0$, define  $\exp^{-1}_p$ similarly but with domain $B(p, \pi/\sqrt \kappa)\setminus\{p\}$.

 Let $K$ be a compact set in $X$. If $\kappa > 0$ assume that $K$ has radius less than $\pi/(2\sqrt \kappa)$. By Theorem B of \cite{LangSchroeder} any bounded set of radius  less than $\pi/(2\sqrt \kappa)$ in a complete CAT($\kappa$) space has a unique circumcenter ; while if  $\kappa \leq 0$ the uniqueness of circumcenter of any bounded set is automatic. Let $p$ be a point in $X$ such that $K \neq \{p\}$, and let $r= \sup_{x\in K} d(p,x)$.
\begin{proposition}\label{circumcenter}
If the image of $\pa B(p,r) \intersect K$ under $\exp^{-1}_p$ is contained
in $B(m,\pi/2)$ for some point $m\in \overline{S_p(X)}$, then $p$ is
not the circumcenter of $K$.
\end{proposition}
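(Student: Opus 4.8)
The plan is to show that $p$ fails to minimise the farthest--point function $f(q)=\sup_{x\in K}d(q,x)$, of which the circumcenter is by definition the minimiser; since $f(p)=r$, it suffices to produce a nearby point $q$ with $f(q)<r$. Geometrically the hypothesis says that every point of $K$ realising the radius $r$ points into the open hemisphere $B(m,\pi/2)$ of $\overline{S_p(X)}$, so I would move $p$ a short distance ``towards $m$'' and argue, by the first variation of arclength, that this strictly shortens the distance to all such farthest points, while the remaining points of $K$, being already closer than $r$, stay closer than $r$. Concretely, write $\chi_x=\exp^{-1}_p(x)$, let $F=\{x\in K : d(p,x)=r\}$, pick an actual direction $m'\in S_p(X)$ close to $m$ (possible since $S_p(X)$ is dense in its completion) and a unit-speed geodesic $\gamma$ from $p$ representing $m'$, and take $q=\gamma(t)$ for small $t>0$.

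To make this work I first need to control the directions of the points that matter. The set $F$ is a nonempty compact subset of $K$. On the region where $d(p,\cdot)$ is bounded away from $0$ the map $\exp^{-1}_p$ is continuous, because for such $x,y$ the comparison angle $\tilde\angle_p(x,y)$ in $M^2(\kappa)$ tends to $0$ as $d(x,y)\to0$, and $\angle(\chi_x,\chi_y)=\angle_p(x,y)\le\tilde\angle_p(x,y)$. Hence $\exp^{-1}_p(F)$ is compact and contained in $B(m,\pi/2)$, so $\sup_{x\in F}\angle(m,\chi_x)=\rho<\pi/2$. By continuity and compactness there is $\eta\in(0,r)$ so small that the enlarged set $F_\eta=\{x\in K : d(p,x)\ge r-\eta\}$ still satisfies $\sup_{x\in F_\eta}\angle(m,\chi_x)=\rho'<\pi/2$; indeed a sequence in $K$ with $d(p,\cdot)\to r$ and directions escaping $\overline{B}(m,\rho')$ would, by compactness of $K$ and continuity of $\exp^{-1}_p$, accumulate at a point of $F$ with direction outside $\overline{B}(m,\rho)$. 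Choosing $m'$ with $\angle(m,m')<\tfrac{1}{2}(\pi/2-\rho')$, we get $\alpha_x:=\angle_p(\gamma,[p,x])=\angle(m',\chi_x)\le\pi/2-\tau$ for every $x\in F_\eta$, where $\tau=\tfrac{1}{2}(\pi/2-\rho')>0$, so $\cos\alpha_x\ge\sin\tau>0$ uniformly. For the points away from the boundary sphere there is nothing subtle: if $d(p,x)\le r-\eta$ then $d(\gamma(t),x)\le t+(r-\eta)<r$ as soon as $t<\eta$, irrespective of direction.

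It remains to treat $x\in F_\eta$, and this is where the real work lies. For each such fixed $x$ the first variation formula gives
\[\lim_{t\to0^+}\frac{d(\gamma(t),x)-d(p,x)}{t}=-\cos\alpha_x\le-\sin\tau<0,\]
the decisive upper estimate coming from $d(\gamma(t),x)\le d(\gamma(t),\sigma_x(s))+(d(p,x)-s)$ together with the convergence $\tilde\angle_p(\gamma(t),\sigma_x(s))\to\alpha_x$ (where $\sigma_x=[p,x]$) and the law of cosines in $M^2(\kappa)$; note that it is the convergence of the comparison angle to the Alexandrov angle, rather than the CAT($\kappa$) inequality itself, that yields an \emph{upper} bound on the mixed distance. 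Pointwise this already forces $d(\gamma(t),x)<r$ for small $t$. The main obstacle is to make the threshold \emph{uniform} over the compact set $F_\eta$, i.e. to secure a single $t_0>0$ and $c>0$ with $d(\gamma(t),x)\le r-ct$ for all $x\in F_\eta$ and $0<t\le t_0$. I expect this uniform first variation to be the crux: it reduces to the convergence $\tilde\angle_p(\gamma(t),\sigma_x(s))\to\alpha_x$ being uniform in $x$, which I would obtain from the joint continuity of $(x,s,t)\mapsto\tilde\angle_p(\gamma(t),\sigma_x(s))$ (using uniqueness and continuous dependence of the geodesics $\sigma_x$ in the relevant ball), the monotonicity of the comparison angle in $(s,t)$, and a Dini-type argument on $F_\eta$ applied to the monotone family $G_x(s,t)=\tilde\angle_p(\gamma(t),\sigma_x(s))\downarrow\alpha_x$ with continuous limit $x\mapsto\alpha_x$. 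Granting this, $f(\gamma(t))=\sup_{x\in K}d(\gamma(t),x)<r=f(p)$ for all sufficiently small $t>0$, so $p$ does not minimise $f$ and hence is not the circumcenter of $K$.
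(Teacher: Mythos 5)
Your proposal is correct, but it takes a genuinely different route from the paper's proof, and the difference is instructive. You argue \emph{directly}: perturb $m$ to a genuine direction $m'$, split $K$ into the points well inside $B(p,r)$ (harmless) and the near-maximal set $F_\eta$, establish a uniform angle bound $\alpha_x\le\pi/2-\tau$ on $F_\eta$ by compactness and continuity of $\exp^{-1}_p$, and then fight for a \emph{uniform} first-variation estimate $d(\gamma(t),x)\le r-ct$ on $F_\eta$ via monotonicity of comparison angles and a Dini argument. That last step, which you rightly flag as the crux, is workable as you outline it (joint continuity of $(x,s)\mapsto\sigma_x(s)$ holds under the standing radius bound, and Dini applies since $x\mapsto\alpha_x$ is continuous on the compact set $F_\eta$), but it is the heaviest part of your argument. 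The paper avoids it entirely with one trick: having represented $m$ by a geodesic $\gamma$ issuing from $p$ (chosen short enough that $\gamma\subset B(x,\pi/(2\sqrt\kappa))$ for all $x\in K$ when $\kappa>0$), it uses \emph{convexity} of $s\mapsto d(x,\gamma(s))$ and argues by contradiction: if no $\gamma(s_1/n)$ satisfies $K\subset B(\gamma(s_1/n),r)$, pick $x_n\in K$ with $d(x_n,\gamma(s_1/n))\ge r$; convexity plus $d(x_n,p)\le r$ forces $d(x_n,\gamma(\cdot))$ to be non-decreasing beyond $s_1/n$, and a subsequential limit $x_0$ then lies in $\pa B(p,r)\intersect K$ with $d(x_0,\gamma(\cdot))$ non-decreasing on all of $[0,s_1]$ --- contradicting the first variation formula at the \emph{single} point $x_0$, whose direction lies in $B(m,\pi/2)$ by hypothesis. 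So compactness is invoked only once, after the fact, on a sequence of bad points, and no uniformity in $x$ is ever needed. What your route buys in exchange is an explicit quantitative decrease ($f(\gamma(t))\le r-ct$) and, notably, an honest treatment of the perturbation of $m$: your uniform bound $\rho'<\pi/2$ on $F_\eta$ is exactly what justifies the paper's casual ``with slight modification if necessary'' when $m$ lies in $\overline{S_p(X)}\setminus S_p(X)$, since the admissible size of the perturbation must be controlled uniformly over the relevant directions, not point by point.
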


\begin{proof}
As $m$ is in the completion of the space of directions $S_p(X)$, with slight modification if necessary, $m$ is represented by a geodesic $\gamma([0,s_1])$, where $\gamma(0)=p$, $s_1 > 0$, and $s_1 < \pi/(2\sqrt \kappa) - r$ if $\kappa >0$, so that $\gamma \subset B(x,\pi/(2\sqrt \kappa))$ for any point $x \in K$. Then $s \mapsto d(x,\gamma(s))$ is convex for any $x\in K$.

We claim that there exists some point $\gamma(s_2)$ such that $K \subset B(\gamma(s_2), r)$, implying that $p$ is not the circumcenter of $K$. Suppose the claim is false, then for every $\gamma(s_1/n)$ there is a point $x_n \in K$ with $d(x_n,\gamma(s_1/n)) \ge r$. Then by convexity of $s \mapsto d(x_n,\gamma(s))$ the function is strictly increasing on $[s_1/n, s_1]$. Since $K$ is compact, a subsequence of $x_n$,  denoted again by $s_n$, converges to a point $x_0$, then the functions $s \mapsto d(x_n,\gamma(s))$ converges uniformly to $s \mapsto d(x_0,\gamma(s))$. So $r \ge d(x_0, p) \ge \liminf_n d(x_n,\gamma(s_1/n) \ge r$, i.e. $d(x_0, p) =r$, and $s \mapsto d(x_0,\gamma(s))$ is strictly increasing on $[0,s_1]$. But from the first variation
formula for CAT($\kappa$) space (\cite{BH}, Corollary II.3.6),
$$\cos \angle_p(\gamma(\cdot),x_0)=\lim_{s\to 0}\frac {d(p,x_0)-d(\gamma(s),x_0)}{s} > 0,$$
a contradiction. Hence the claim.\end{proof}

With the above results we show Theorem \ref{inequality}.
\begin{proof}
Note that Theorem B of \cite{LangSchroeder} asserts that the orbit of $x$ has a unique circumcenter for the case $\kappa >0$, so the point $c$ is well-defined. Suppose on the contrary that $\angle_{c}(x, g\cdot x) <  2\pi /n$. Joining $\exp^{-1}_p (g^i \cdot x)$ successively by geodesic segments in $\overline{S_c(X)}$, we obtain a closed $n$-gon in $\overline{S_{c}(X)}$ with length less than $2\pi$. Apply the Reshetnyak majorization theorem, we get a distance non-increasing map from a convex region on the unit 2-sphere to $\overline{S_c(X)}$ such that the boundary of the convex region is mapped to the $n$-gon both of which have the same length. By Lemma \ref{short closed curve}, there exists a point $\bar m$ on the 2-sphere such that $B(\bar m,\pi/2)$ covers the convex region, and from the construction of $\bar m$ in the proof it follows that $\bar m$ is in the convex region. Let $m$ be the image of $\bar m$ in $\overline{S_c(X)}$. As the map does not increase distance, the $n$-gon is in $B(m,\pi/2)$.
Proposition \ref{circumcenter} implies that $c$ is not the circumcenter, a contradiction.
\end{proof}

Next we show Theorem \ref{equality} by using Theorem \ref{inequality} to derive a contradiction.
\begin{proof}
We claim that joining any two consecutive segments gives a local geodesic. For this it suffices to show that for any $m \in [\chi, g\cdot \chi]$, $d(m,g\cdot m)=2\pi/n$. Suppose not, then take $m \in [\chi, g\cdot \chi]$ such that $d(m,g\cdot m)< 2\pi/n$. The segments $[g^i \cdot m, g^{i+1}\cdot m]$ form a curve shorter than $2\pi$, so using Reshetnyak majorization theorem as before this curve is contained in an open ball of radius $\pi/2$. Since $\overline{S_c(X)}$ is CAT(1), the orbit of $m$ has a unique circumcenter $q$ in $\overline{S_c(X)}$, which is fixed by $g$. The distance between $q$ and the orbit of $\chi$ equals $d(q, \chi)$, and by Proposition \ref{circumcenter} this distance is at least $\pi/2$, otherwise $c$ could not be the circumcenter of the orbit of $x$, so there exists $\chi'\in [\chi,m]$ with $d(\chi',q)=\pi/2$.

Applying Theorem \ref{inequality} to $\overline{S_c(X)}$ and the orbit of $m$, we see that $\angle_q(m,g\cdot m)\geq 2\pi/n$. Then
\begin{align*}
\angle_q(\chi',m) + \angle_q(m,g\cdot \chi') & = \angle_q(g\cdot \chi', g\cdot m)+\angle_q(m, g\cdot \chi') \\
& \geq  \angle_q(m, g\cdot m)\geq \frac {2\pi} n,
\end{align*}
while
\begin{align*}
d(\chi',m) + d(m,g\cdot \chi') & \leq d(\chi',m) + d(m,g\cdot \chi) + d(g\cdot \chi,g \cdot \chi') \\
&= d(\chi',m) + d(m,g\cdot \chi) + d (\chi, \chi')\\
&=d(\chi, g\cdot \chi) = \frac {2\pi} n.
\end{align*}
Construct the comparison triangles $\triangle (\bar q,\bar{\chi'},\bar
m)$ of $\triangle (q,\chi',m)$ and $\triangle (\bar q, \bar m,\bar {g\cdot \chi'})$ of $\triangle (q,m,g\cdot \chi')$ on the unit 2-sphere on
the opposite sides of $[\bar q, \bar m]$. Since $d(\bar q,\bar m) < \pi
/2$, the segments $[\bar{\chi'},\bar m]$ and $[\bar m, \bar{g\cdot
\chi'}]$ do not form a geodesic, so
$$d(\bar{\chi'}, \bar{g\cdot \chi'}) < d(\bar{\chi'},\bar m) + d(\bar m,\bar{g\cdot \chi'}) =d(\chi',m) + d(m,g\cdot \chi') \leq \frac {2\pi} n,$$
 As $d(\bar q,\bar{\chi'}) = d(\bar q,\bar {g\cdot \chi'}) =\pi/2$, in the triangle $\triangle(\bar q,\bar{\chi'},\bar{g\cdot \chi'})$ we have $\angle_{\bar q}(\bar{\chi'},\bar{g\cdot \chi'}) = d(\bar{\chi'},\bar{g\cdot \chi'}) < 2\pi/n$.
But we have
\begin{align*}
\angle_{\bar q}(\bar{\chi'},\bar{g\cdot \chi'}) &=\angle_{\bar q}(\bar{\chi'},\bar m) +\angle_{\bar q}(\bar m,\bar {g\cdot \chi'}) \\
&\geq \angle_q(\chi',m) + \angle_q(m,g\cdot \chi') \geq \frac {2\pi} n.
\end{align*}
Hence a contradiction. (Note that here if we used $\chi$ instead of $\chi'$, then $d(\bar q,\bar{\chi}) \geq \pi/2$ only implies $\angle_{\bar q}(\bar{\chi},\bar{g\cdot \chi}) \geq d(\bar{\chi},\bar{g\cdot \chi})$, and the argument would not work.)

The claim means that the concatenation of  $[g^i \cdot \chi, g^{i+1} \cdot \chi]$ is a local geodesic in the CAT(1) space $\overline{S_c(X)}$, thus it is an isometric embedding of a circle of length $2\pi$.

The Euclidean cone over this circle is a 2-flat in the tangent cone. Since $g$ acts on the circle by rotation of $2\pi/n$, this gives the same rotation by $g$ on the tangent cone.
\end{proof}

Before closing this section, we give an inequality on distances between 3 consecutive orbit points in CAT(0) space using the above inequality. We suppose the order of $g$ is at least 4, since if the order is 3, the orbit points must form an equilateral triangle.
\begin{corollary}\label{polyineq}
Let $X$ be a complete CAT(0) space. Suppose $g$ has order $n \geq 4$. For any point $x$ not fixed by $g$,
$$d(g^2 \cdot x, x ) \leq 2 \cos \left(\frac \pi n \right) d(g \cdot x, x),$$
with equality holds iff $g^i \cdot x$ are vertices of an isometrically embedded flat regular $n$-gon.
\end{corollary}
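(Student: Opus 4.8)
The plan is to set up a comparison in the Euclidean plane using the three consecutive orbit points and the circumcenter, then invoke Theorem \ref{inequality} to control the relevant angle. Let $c$ be the circumcenter of the orbit of $x$, and write $r = d(c, x)$, which equals $d(c, g^i \cdot x)$ for all $i$ since $g$ fixes $c$ and acts by isometries. Set $\theta = \angle_c(x, g \cdot x)$. By Theorem \ref{inequality}, $\theta \geq 2\pi/n$. The three points $x$, $g \cdot x$, $g^2 \cdot x$ all lie at distance $r$ from $c$, and by equivariance $\angle_c(g \cdot x, g^2 \cdot x) = \theta$ as well.

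First I would estimate $d(g \cdot x, x)$ from below and $d(g^2 \cdot x, x)$ from above via comparison triangles. Applying the CAT(0) comparison inequality (law of cosines in the comparison triangle) to the triangle with vertices $c$, $x$, $g\cdot x$ gives
\begin{align*}
d(g \cdot x, x)^2 \leq 2r^2(1 - \cos \theta) = 4 r^2 \sin^2(\theta/2),
\end{align*}
and similarly the triangle with vertices $c$, $x$, $g^2 \cdot x$ has angle $\angle_c(x, g^2 \cdot x) \leq 2\theta$ (by the triangle inequality in the space of directions, $\angle_c(x, g^2\cdot x) \leq \angle_c(x, g\cdot x) + \angle_c(g\cdot x, g^2\cdot x)$), which would suggest a bound on $d(g^2\cdot x, x)$. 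However, I expect a cleaner route: rather than comparing the two chords to the common radius $r$, I would directly compare the triangle with vertices $x$, $g \cdot x$, $g^2 \cdot x$, using the angle at the middle vertex $g \cdot x$. The point $g \cdot x$ is the circumcenter's reflection partner, and the key angle is $\beta = \angle_{g \cdot x}(x, g^2 \cdot x)$.

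The cleaner approach is to work with the isoceles structure directly. Because $d(x, g\cdot x) = d(g\cdot x, g^2\cdot x) =: \ell$, the inequality $d(g^2\cdot x, x) \leq 2\cos(\pi/n)\, \ell$ is exactly the statement that in the comparison triangle the base angle at $g \cdot x$ is at least $\pi - 2\pi/n$; equivalently the two base angles at $x$ and at $g^2\cdot x$ are each at most $\pi/n$. So the real content is to show $\angle_{x}(c, g\cdot x) \le \pi/n$ type control, or to translate $\theta \ge 2\pi/n$ at the circumcenter into the desired bound at the middle vertex. Concretely, I would place $c, x, g\cdot x, g^2\cdot x$ via comparison and note that the Euclidean configuration of four points on a circle of radius $r$ subtending successive central angles $\theta$ has $d(g^2\cdot x, x) = 2r\sin\theta$ and $d(g\cdot x, x) = 2r\sin(\theta/2)$, giving ratio $2\cos(\theta/2) \le 2\cos(\pi/n)$ since $\theta \ge 2\pi/n$ and cosine is decreasing on $[0,\pi]$. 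The CAT(0) comparison must be applied carefully so that these Euclidean identities become the correct one-sided inequalities in $X$: the upper bound on $d(g^2\cdot x, x)$ needs an upper comparison while the ratio involves $d(g\cdot x, x)$ in the denominator, so I would derive an absolute inequality $d(g^2\cdot x, x) \le 2\cos(\theta/2)\, d(g\cdot x, x)$ valid in any CAT(0) space by comparing the four-point subembedding, then substitute $\theta \ge 2\pi/n$.

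The main obstacle I anticipate is the direction of the comparison inequality: CAT(0) comparison gives chords that are \emph{shorter} than their Euclidean models, so naively bounding $d(g^2\cdot x, x)$ from above is not immediate from a single comparison triangle. I would resolve this by using the subembedding / four-point condition for CAT(0) spaces (the $\boxtimes$-inequality), embedding the ordered quadruple $x, g\cdot x, g^2\cdot x$ together with $c$ comparably, and exploiting that the angle $\angle_{g\cdot x}(x, g^2\cdot x)$ is controlled below by $\pi - \theta$ using the first variation and the fact that $c$ is the circumcenter. For the equality case, I would trace back: equality $d(g^2\cdot x, x) = 2\cos(\pi/n)\,\ell$ forces $\theta = 2\pi/n$ and forces all comparison triangles to be isometrically embedded, whereupon Theorem \ref{equality} supplies the isometrically embedded circle in $\overline{S_c(X)}$ and the flat $2$-flat in the tangent cone; running the cone construction down to the orbit points themselves yields that the $g^i \cdot x$ span an isometrically embedded flat regular $n$-gon, and conversely the regular $n$-gon clearly realizes equality.
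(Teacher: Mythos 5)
There is a genuine gap. Your plan ultimately rests on an ``absolute inequality'' $d(g^2\cdot x, x) \le 2\cos(\theta/2)\, d(g\cdot x, x)$, claimed valid in any CAT(0) space, where $\theta = \angle_c(x, g\cdot x)$; you would then substitute $\theta \ge 2\pi/n$. This intermediate inequality is false. Take $X$ to be the metric tree consisting of four segments of length $1$ glued at a point $c$, with $g$ of order $4$ cyclically permuting the segments and $x$ a tip. Then $c$ is the circumcenter of the orbit, $d(x, g\cdot x) = d(x, g^2\cdot x) = 2$, and $\angle_c(x, g\cdot x) = \pi$ (both the Alexandrov and the comparison angle, since the triangle $(c,x,g\cdot x)$ is degenerate), so your inequality would assert $2 \le 2\cos(\pi/2)\cdot 2 = 0$. (The corollary itself holds there: $2 \le 2\cos(\pi/4)\cdot 2$.) The underlying problem is that the Euclidean identity $d(x, g^2\cdot x) = 2r\sin\theta$ presupposes four points on a circle with successive central angles $\theta$, and no four-point or ``subembedding'' property of CAT(0) spaces produces such a configuration; Reshetnyak majorization of the quadrilateral does not fix its shape, and knowing the angle at $c$ is \emph{large} pushes the majorizing diagonal in the wrong direction for an upper bound. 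The reversed first display in your proposal is symptomatic of the same issue: for the Alexandrov angle $\theta$, CAT(0) gives $d(g\cdot x, x)^2 \ge 2r^2(1-\cos\theta)$, not $\le$.

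What is missing is the case split and gluing device that the paper uses. Writing $y = g\cdot x$, $z = g^2\cdot x$: when the comparison angle $\angle_{\bar c}(\bar x,\bar y) \le \pi/2$, glue the two comparison triangles $\triangle(\bar c,\bar x,\bar y)$ and $\triangle(\bar c,\bar y,\bar z)$ along $[\bar c,\bar y]$, let $\bar p$ be the point where $[\bar x,\bar z]$ meets $[\bar c,\bar y]$ (the angle restriction is exactly what guarantees $\bar p$ lies on that segment), take $p\in[c,y]$ with $d(c,p)=d(\bar c,\bar p)$, and use the CAT(0) comparison for points on the sides of a triangle: $d(z,x) \le d(z,p)+d(p,x) \le d(\bar z,\bar p)+d(\bar p,\bar x) = 2 d(\bar x,\bar y)\cos\left(\tfrac12\angle_{\bar c}(\bar x,\bar y)\right) \le 2\cos(\pi/n)\,d(y,x)$, where Theorem \ref{inequality} enters via $\angle_{\bar c}(\bar x,\bar y) \ge \angle_c(x,y) \ge 2\pi/n$. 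When $\angle_{\bar c}(\bar x,\bar y) > \pi/2$ one needs a different argument, namely the crude bound $d(z,x) \le 2r < \sqrt 2\, d(\bar x,\bar y) \le 2\cos(\pi/n)\, d(y,x)$, which is where $n\ge 4$ is used; your proposal has no mechanism at all for this large-angle regime, and the tree example lives exactly there. Finally, your equality discussion also does not close as stated: Theorem \ref{equality} gives a flat in the tangent cone at $c$, which is infinitesimal information and does not by itself imply the orbit points span a flat regular $n$-gon; the paper instead traces equality through the comparison inequalities to obtain flat triangles (Flat Triangle Theorem) and then applies the corollary of the Flat Quadrilateral Theorem to the angle sum $(n-2)\pi$ of the $n$-gon.
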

\begin{proof}
To simplify notation we denote $g\cdot x$ and $g^2\cdot x$ by $y$ and $z$. Consider the two comparison triangles $\triangle(\bar c, \bar x, \bar y)$ and $\triangle(\bar c, \bar y, \bar z)$ on the flat plane with $\bar {c}$ and $\bar{y}$ as their common vertices, and place $\bar x$ and $\bar{z}$ on opposite sides of edge $[\bar c, \bar y]$. These two triangles are congruent isosceles triangles.

In the case $\angle_{\bar c}(\bar x,\bar y) > \pi/2$, we have strict inequality
\begin{align*}
d(g^2\cdot x, x) &\leq d(g^2\cdot x, c) + d(c,x) = d(\bar z,\bar c)+d(\bar c,\bar x) \\
&< \sqrt 2  d(\bar x, \bar y) =  \sqrt 2 d(g \cdot x, x)\leq 2 \cos\left( \frac \pi n \right) d(g \cdot x, x).
\end{align*}

In the case $\angle_{\bar c}(\bar x,\bar y) \leq \pi/2$, draw segment $[\bar x, \bar z]$ intersecting segment $[\bar c, \bar y]$ at the point $\bar p$. By Theorem \ref{inequality} we get
\begin{align}\label{eq:1a}
\angle_{\bar c}(\bar x,\bar y) \geq \angle_c(x,y) \geq \frac {2\pi}{n},
\end{align}
hence
\begin{align}\label{eq:1b}
\angle_{\bar x}(\bar y, \bar p) = \frac 1 2\angle_{\bar c}(\bar x,\bar y) \geq \frac {\pi}{n},
\end{align}
 and
\begin{align}\label{eq:2}
d(\bar p, \bar x) =d(\bar z,\bar p)= d(\bar x, \bar y) \cos
\angle_{\bar x}(\bar y, \bar p) \leq \cos \left(\frac{\pi}{n}\right)
d(\bar x, \bar y)
\end{align}
Let $p$ be on geodesic segment $[c,y]$ such that $d(c,p) = d(\bar c,\bar p)$. It follows that
\begin{align}\label{eq:3}
  \begin{aligned}
d(g^2\cdot x, x) &\leq d(g^2\cdot x,p) + d(p,x) \leq d(\bar z,\bar p) + d(\bar p,\bar x) \\
&\leq  2 \cos \left(\frac{\pi}{n}\right) d(\bar x, \bar y) =  2 \cos
\left(\frac{\pi}{n}\right)d(g \cdot x, x)
  \end{aligned}
\end{align}
Hence the inequality.

If the equality holds, by (\ref{eq:2}) and (\ref{eq:3}) $\angle_{\bar x}(\bar y, \bar p) =\pi/n$, so from (\ref{eq:1a}) and (\ref{eq:1b}) we have $\angle_{\bar c}(\bar x,\bar y) =\angle_c(x,y) =2\pi/n$. Then the Flat Triangle Theorem implies that $\triangle(c,x,g\cdot x)$ is flat, so is $\triangle(c,g\cdot x,g^2 \cdot x)$. Moreover, from  (\ref{eq:3}) $d(g^2\cdot x, x) =d(g^2\cdot x,p) + d(p,x)$, so $[g^2\cdot x,p]\union [p,x]$ is a geodesic, which implies that $\triangle(x,g\cdot x, g^2\cdot x)$ is flat. Hence $\angle_{g\cdot x}(x,g^2\cdot x) = (n-2)\pi/n$, thus the sum of the angles of the $n$-gon equals $(n-2)\pi$, so by a corollary of the Flat Quadrilateral Theorem (\cite{BH} Exercise II.2.12(1)), the convex hull of the $n$-gon is isometric to a flat regular $n$-gon. The converse is clear.
\end{proof}

\section{Regular polytopes}
We will now derive results for a finite set of points with symmetry of any regular polyhedron similar to Theorem \ref{inequality}. We will employ the tools for tangent cones developed by Lang and Schroeder \cite{LangSchroeder} and we will also need the inequality in Theorem \ref{equality}. The main theorem we will prove is the following:

\begin{theorem}\label{anglepolyhedra}
Let $W$ be a regular Platonic solid or a regular hypercube or a regular orthoplex with vertices $\bar{x_i}$, and $G$ be the (orientation preserving) symmetric group of $W$. Suppose that $G$ also acts on a CAT($\kappa$) space $X$ by isometries.  Let $x_i$ be points in $X$ on which the induced action of $G$ is equivariant with that on $\bar {x_i}$. Suppose that the set of points $\{x_i\}$ has radius less than $\pi/(2\kappa)$ if $\kappa>0$. For any edge of $W$ with endpoints $\bar {x_i}$ and $\bar {x_j}$, the Alexandrov angle $\angle_c(x_i, x_j)$ is no less than the corresponding (Euclidean) angle $\angle_{\bar c}(\bar {x_i}, \bar {x_j})$, where $c$ and $\bar c$ are the circumcenters of the set $\{x_i\}$ and $W$ respectively. In the equality case the convex hull of $\{x_i\}$ is isometric to the regular polytope $W$ with radius 1.
\end{theorem}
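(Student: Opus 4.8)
The plan is to transfer the whole problem into the space of directions at $c$ and then feed a face-rotation into Theorem \ref{inequality}. First, since the set $\{x_i\}$ is $G$-invariant and, by Theorem B of \cite{LangSchroeder}, has a unique circumcenter once its radius is $<\pi/(2\sqrt\kappa)$, the group $G$ must fix $c$. Hence $G$ acts by isometries on the completion of the space of directions $\Sigma:=\overline{S_c(X)}$, which is CAT(1), and the directions $\chi_i:=\exp^{-1}_c(x_i)$ are permuted by $G$ equivariantly with the unit vectors $\bar\chi_i=\bar x_i/\|\bar x_i\|$ pointing from $\bar c$ to the vertices of $W$. Because the rotation group $G$ is vertex-transitive, all $d(c,x_i)$ agree, and moreover $\angle_c(x_i,x_j)=d_\Sigma(\chi_i,\chi_j)$ while $\angle_{\bar c}(\bar x_i,\bar x_j)=d_{S^{d-1}}(\bar\chi_i,\bar\chi_j)=:\alpha_0$. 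Thus it suffices to prove the purely spherical comparison $d_\Sigma(\chi_i,\chi_j)\ge\alpha_0$ for the $G$-equivariant configuration in the CAT(1) space $\Sigma$, exactly the setting in which Lang and Schroeder work for the simplex.

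For an edge $[\bar x_i,\bar x_j]$ I would pick a $2$-face $F$ of $W$ containing it and let $\rho\in G$ be the rotation of order $k\ge 3$ about the axis through $\bar c$ and the center of $F$, so that $\rho$ cyclically permutes the $k$ vertices of $F$ and $\rho\cdot\bar x_i=\bar x_j$. The orbit $\{\rho^m\chi_i\}\subset\Sigma$ is then a closed $k$-gon whose circumcenter $\nu$ (the direction toward the center of $F$) is $\rho$-fixed; write $\beta:=d_\Sigma(\nu,\chi_i)$ for its circumradius. Assuming $\beta<\pi/2$, Theorem \ref{inequality} applied in the CAT(1) space $\Sigma$ to the elliptic isometry $\rho$ gives $\angle_\nu(\chi_i,\chi_j)\ge 2\pi/k$. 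Comparing the triangle $(\nu,\chi_i,\chi_j)$ with its spherical comparison triangle and using that the Alexandrov angle is at most the comparison angle, the spherical law of cosines yields
\begin{equation*}
\cos d_\Sigma(\chi_i,\chi_j)\le \cos^2\beta+\sin^2\beta\,\cos\frac{2\pi}{k}.
\end{equation*}
On the round sphere this computation is an equality, $\cos\alpha_0=\cos^2\beta_0+\sin^2\beta_0\cos\frac{2\pi}{k}$, where $\beta_0$ is the circumradius of $F$ seen from $\bar c$. Since $k\ge 3$ the right-hand side is increasing in $\cos^2\beta$, so the desired inequality $d_\Sigma(\chi_i,\chi_j)\ge\alpha_0$ would follow at once from the circumradius comparison $\beta\ge\beta_0$.

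The crux, and the step I expect to be the main obstacle, is therefore establishing $\beta\ge\beta_0$, i.e.\ controlling the location of the orbit circumcenter $\nu$, because the naive CAT comparison only bounds $\beta$ from above. I would prove it by induction on the rank of the faces of $W$: the quantity $\beta$ is itself a circumradius-angle of the $2$-face $F$ measured from $c$, and by passing to the space of directions at the circumcenter of $F$ (again CAT(1)) the face $F$ sits inside a higher face as a configuration of the same type, so the inductive hypothesis governs $\beta$. The degenerate strata must be handled with Theorem \ref{equality}: whenever one of these comparisons is attained — most critically for the centrally symmetric families, where the configuration of vertex-directions has global circumradius equal to $\pi/2$ in $\Sigma$ (as for the orthoplex) and the bound $2\pi/k$ is saturated — the resulting isometrically embedded circle and $2$-flat in the tangent cone pin down the configuration and force the required equality to propagate through the induction. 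The tangent-cone and circumcenter technology of \cite{LangSchroeder} is what makes these comparisons between $\Sigma$ and the round sphere rigorous; the genuinely new input beyond their simplex argument is the face-rotation together with Theorems \ref{inequality} and \ref{equality}, needed precisely because the hypercube and orthoplex have antipodal vertices, so the simplex's global ``everything fits in a hemisphere'' contradiction is unavailable.
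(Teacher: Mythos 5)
Your reduction to the CAT(1) space $\Sigma=\overline{S_c(X)}$ is sound (as in the paper, $G$ fixes the unique circumcenter $c$, and $\angle_c(x_i,x_j)=d_\Sigma(\chi_i,\chi_j)$ by definition), and the spherical comparison you run \emph{given} the angle bound $\angle_\nu(\chi_i,\chi_j)\ge 2\pi/k$ is correct. But the proof is incomplete exactly at the step you yourself flag as the crux: the circumradius lower bound $\beta\ge\beta_0$ is never established, and nothing in your sketch supplies it. All the comparison tools available here --- CAT(1) triangle comparison, Reshetnyak majorization, the Jung-type circumradius estimates of \cite{LangSchroeder} --- bound circumradii from \emph{above} in terms of pairwise distances, never from below; and a lower bound on $d_\Sigma(\chi_i,\chi_j)$ is precisely what you are trying to prove, so you cannot feed it back into a lower bound for $\beta$ without circularity. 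The proposed ``induction on the rank of the faces'' has no base case and no mechanism: for a $2$-face the statement to be proved \emph{is} the missing lemma, and passing to the space of directions at ``the circumcenter of $F$'' does not produce a smaller instance of the same statement (nor is $\nu$ ``the direction toward the center of $F$'' --- in a general $X$ there is no such center; $\nu$ is only defined as the orbit circumcenter, and only when the orbit radius is $<\pi/2$). Relatedly, Theorem \ref{inequality} applied in $\Sigma$ (i.e.\ with $\kappa=1$) requires the $\rho$-orbit of $\chi_i$ to have radius less than $\pi/2$, so the case $\beta\ge\pi/2$, which you assume away, would itself need an argument: there $\nu$ need not exist, be unique, or be $\rho$-fixed. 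Invoking Theorem \ref{equality} ``to handle degenerate strata'' does not repair this, since rigidity statements only apply once the corresponding inequality is already known.

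For contrast, the paper's proof never needs a lower bound on the circumradius of any sub-orbit. It embeds $\overline{S_c(X)}$ in its Euclidean cone, shows the cone origin is the circumcenter of the unit vectors $v_i$, and uses the Lang--Schroeder correspondence together with Proposition \ref{LSineq} and averaging over $G$ to obtain the single global inequality $\sum_{i,j}\langle v_i,v_j\rangle\le 0$ (inequality \ref{ineq}). The edge-angle bound is then extracted case by case purely from the orbit structure of chords: antipodal pairs are discarded with the trivial bound $\langle v_i,v_j\rangle\ge-\|v_i\|\,\|v_j\|$, and the longer chords are bounded above by edge lengths using Corollary \ref{polyineq} (applied to the cyclic subgroups $C_5$ for the icosahedron and dodecahedron, with an induction on dimension and the parallelogram inequality for the hypercube and dodecahedron). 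Every inequality invoked there goes in the direction that CAT comparison actually provides, which is the structural reason that route closes and yours, as written, does not. To salvage your approach you would have to prove $\beta\ge\beta_0$, which appears to be of the same order of difficulty as the theorem itself.
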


We note that if $W$ is a regular $n$-simplex, then the above theorem is a special case of the main theorem of Lang and Schroeder (\cite{LangSchroeder} Theorem A).

Project the points $x_i$ to the completed space of directions $\overline{S_c(X)}$, then embed $\overline{S_c(X)}$ to its tangent cone, which is the Euclidean cone $C_0(\overline{S_c(X)})$. Let $v_i$ be the images of points $x_i$ in the tangent cone, and let $o$ be the origin of the tangent cone. Then $\overline{S_c(X)}$ is the unit sphere centered at $o$, and $v_i$ are at a distance 1 away from $o$. The action of $G$ on $X$ induces one on $C_0(\overline{S_c(X)})$ by isometries.

The following has been noted in \cite{LangSchroeder}.
\begin{lemma}
The origin $o$ is the circumcenter of the points $v_i$.
\end{lemma}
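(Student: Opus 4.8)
The plan is to show that $o$ is the unique point of the Euclidean cone $C_0(\overline{S_c(X)})$ whose maximal distance to the $v_i$ is at most $1$. Since $\max_i d(o,v_i)=1$, the circumradius of $\{v_i\}$ is at most $1$, so this uniqueness immediately forces $o$ to be the circumcenter (and shows the circumradius is exactly $1$, so no separate existence argument is required). The only real input is Proposition \ref{circumcenter}, fed through the explicit metric of the Euclidean cone near its apex.

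First I would write every point of the cone as $(t,\xi)$ with radial coordinate $t\ge 0$ and $\xi\in\overline{S_c(X)}$, the apex being $o$ and $v_i=(1,\xi_i)$ where $\xi_i=\exp^{-1}_c(x_i)$. The cone metric then gives $d\big((t,\eta),(1,\xi_i)\big)^2=t^2+1-2t\cos\theta_i$, with $\theta_i=\min(d(\eta,\xi_i),\pi)$ and $d$ on the right the angular metric of $\overline{S_c(X)}$. Now suppose $z=(t,\eta)$ with $z\neq o$, so $t>0$, satisfies $\max_i d(z,v_i)\le 1$. Then $t^2+1-2t\cos\theta_i\le 1$ for every $i$, i.e. $t\le 2\cos\theta_i$, which forces $\cos\theta_i>0$ and hence $d(\eta,\xi_i)<\pi/2$ for every $i$. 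Thus all the directions $\xi_i$ lie in the open ball $B(\eta,\pi/2)$ of $\overline{S_c(X)}$.

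This is precisely the configuration excluded by Proposition \ref{circumcenter}. Indeed the $\xi_i$ are the images of the $x_i$ under $\exp^{-1}_c$, so in particular the image of $\partial B(c,r)\cap\{x_i\}$ (where $r=\sup_i d(c,x_i)$) is contained in $B(\eta,\pi/2)$, and the proposition would then assert that $c$ is not the circumcenter of $\{x_i\}$, contrary to the standing hypothesis that $c$ is that circumcenter. Hence no point other than $o$ can have maximal distance $\le 1$ to the $v_i$, which yields the claim.

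The step I expect to require the most care is the identification of the local geometry at the apex: one must verify that the space of directions of the cone at $o$ is canonically $\overline{S_c(X)}$, that $\exp^{-1}_o(v_i)=\xi_i$, and that the angle at $o$ between the rays to $v_i$ and to $z$ is exactly $\theta_i$, so that the elementary cone computation is valid and, crucially, so that the hemisphere condition transfers verbatim into the hypothesis of Proposition \ref{circumcenter}. A minor bookkeeping point worth recording is that the proposition only needs the \emph{subset} of directions realizing the radius $r$ to land in $B(\eta,\pi/2)$, which is automatic here since \emph{all} the $\xi_i$ do; consequently one does not even need to invoke uniqueness of the circumcenter inside the cone, because $o$ itself attains the bound $1$.
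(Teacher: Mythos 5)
Your proof is correct, and its skeleton matches the paper's: a competitor point for the circumcenter of $\{v_i\}$ forces all the directions $\xi_i=\exp_c^{-1}(x_i)$ into an open ball of radius $\pi/2$ in $\overline{S_c(X)}$, which contradicts $c$ being the circumcenter of $\{x_i\}$ via Proposition \ref{circumcenter}. The execution differs in a worthwhile way, though. The paper argues: if $o$ is not the circumcenter, take the actual circumcenter $q$ of the $v_i$ in the cone; the segment $[o,q]$ then makes angle less than $\pi/2$ with each $[o,v_i]$, and a ``slight perturbation'' turns the direction of $[o,q]$ into a geodesic issuing from $c$ that shortens all the distances $d(\cdot,x_i)$ --- i.e.\ it silently rederives Proposition \ref{circumcenter} rather than citing it, and it needs existence and uniqueness of the circumcenter of $\{v_i\}$ in the complete CAT(0) cone plus a first-variation (convexity) argument to get the angle bound. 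You instead take an \emph{arbitrary} point $z=(t,\eta)$, $t>0$, with $\max_i d(z,v_i)\le 1$ and read the hemisphere condition directly off the cone law of cosines $d(z,v_i)^2=t^2+1-2t\cos\theta_i$, so you never need the circumcenter of $\{v_i\}$ to exist, nor any variation formula, and you invoke Proposition \ref{circumcenter} explicitly; the concluding observation that $o$ attains max-distance $1$ then makes $o$ the circumcenter with no separate existence step. Your version is more elementary and fills in details the paper leaves as a sketch; the paper's version is shorter but leans on machinery (circumcenter existence in the cone, first variation) that your computation renders unnecessary. One point both treatments leave implicit: Proposition \ref{circumcenter} requires $K\neq\{c\}$, i.e.\ not all $x_i$ coincide with $c$, which is the degenerate case where the $v_i$ are undefined anyway.
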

\begin{proof}
Assume otherwise, then the segment from $o$ to the circumcenter of the points $v_i$ would make an angle less than $\pi/2$ with segments from $o$ to $v_i$. With slight perturbation if necessary, this direction corresponds to a geodesic segment from $o$ along which the distance to $x_i$ decreases, hence $o$ would not be the circumcenter of $x_i$, a contradiction.
\end{proof}

Thus it suffices to prove the theorem for tangent cones. In the following, we let $Y$ be a metric space such that the Euclidean cone $C_0(Y)$ is a complete CAT(0) space.

Recall that the ``scalar product'' on $C_0(Y)$ is defined as
$$ \inner{ v, w} :=\|v\| \|w\| \cos \angle_o(v,w) $$
with the concavity property
$$\langle \gamma(t), w\rangle \ge (1-t)\inner{u,w} + t \inner{v,w}$$
resulting from the CAT(0) inequality, where $\gamma:[0,1] \to  C_0(Y)$ is a geodesic segment with $\gamma(0)=u$ and $\gamma(1)=v$.

\begin{proposition}[\cite{LangSchroeder} Proposition 2.4]\label{LSineq}
Let $v_1, \cdots , v_n\in C_0(Y)$, and $(v,\lambda), (v',\lambda')\in\mathcal{C}$, where $\lambda=(\lambda_1,\cdots, \lambda_n)$ and $\lambda'=(\lambda'_1,\cdots, \lambda'_n)$. Then $\inner{v,v'} \geq \sum_{i,j=1}^n \lambda_i\lambda'_j\inner{v_i,v_j}$.
\end{proposition}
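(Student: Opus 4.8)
The plan is to reduce the bilinear inequality to a one-sided linear estimate and then iterate it in each slot. The key observation is that the stated concavity of $w'\mapsto\inner{w,w'}$ along geodesics is precisely a two-point, one-argument version of the desired bound, so the whole proposition should follow by bootstrapping that single inequality.

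First I would prove the single-argument bound: for any $(v,\lambda)\in\mathcal{C}$ and any $w\in C_0(Y)$,
$$\inner{v,w}\geq\sum_{i=1}^n\lambda_i\inner{v_i,w}.$$
This is proved by induction on the number of steps needed to build $(v,\lambda)$ as an iterated convex combination of $v_1,\dots,v_n$. The base case is a vertex $(v_i,e_i)$, where the bound holds with equality. For the inductive step, one writes $v=\gamma(t)$ with $\gamma$ the geodesic from $u$ to $u'$, where $(u,\mu)$ and $(u',\mu')$ lie in $\mathcal{C}$ and are already covered by the inductive hypothesis, and where $\lambda=(1-t)\mu+t\mu'$. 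The stated concavity gives $\inner{v,w}\geq(1-t)\inner{u,w}+t\inner{u',w}$, and substituting the inductive estimates for $\inner{u,w}$ and $\inner{u',w}$ and collecting coefficients yields $\inner{v,w}\geq\sum_i\bigl((1-t)\mu_i+t\mu'_i\bigr)\inner{v_i,w}=\sum_i\lambda_i\inner{v_i,w}$.

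With this lemma in hand, the proposition follows by applying it twice. Using $(v,\lambda)\in\mathcal{C}$ in the first slot (with $w=v'$ fixed) gives $\inner{v,v'}\geq\sum_i\lambda_i\inner{v_i,v'}$. For each fixed $i$, symmetry of the scalar product together with $(v',\lambda')\in\mathcal{C}$ in the remaining slot gives $\inner{v_i,v'}\geq\sum_j\lambda'_j\inner{v_i,v_j}$. Because the weights $\lambda_i$ are nonnegative, I may multiply each of these inequalities by $\lambda_i$ and sum without reversing any sign, producing $\inner{v,v'}\geq\sum_{i,j}\lambda_i\lambda'_j\inner{v_i,v_j}$, as claimed.

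The main obstacle is the inductive step: I must verify that the recursive definition of $\mathcal{C}$ assigns to $v=\gamma(t)$ exactly the weight vector $(1-t)\mu+t\mu'$, so that the concavity estimate can be chained consistently with the weight bookkeeping. Once this compatibility is in place, the only analytic ingredient is the single concavity inequality, and the nonnegativity of the $\lambda_i$ is precisely what allows the one-argument bound to be promoted to the bilinear one.
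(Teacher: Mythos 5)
Your proof is correct and follows essentially the intended argument: the paper itself states this proposition without proof (quoting it from Lang--Schroeder, Prop.\ 2.4), and the two ingredients it records beforehand --- the concavity of $w'\mapsto\inner{\gamma(t),w'}$ along geodesics and the recursive generation of $\mathcal{C}$ from the vertices $(v_i,e_i)$ --- are exactly what your induction and subsequent symmetrization use. The compatibility you flag as the ``main obstacle'' is automatic, since clause (2) of the definition of $\mathcal{C}$ assigns the weight vector $(1-t)\mu+t\mu'$ to the point $\gamma(t)$ by fiat, so the weight bookkeeping in your inductive step is built into membership of $\mathcal{C}$.
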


For a convex hull $K$ of $n$ points $v_i$, a correspondance $\mathcal{C}\subset K\times \Delta_{n-1}$ with the set of $n$-tuples $\Delta_{n-1}=\{(\lambda_1,\lambda_2,\cdots,\lambda_n) : 0 \leq \lambda_i \leq 1, \sum_{i=1}^{n} \lambda_i=1\}$ is defined in \cite{LangSchroeder} as follows
\begin{enumerate}
  \item $(v_1,e_i)\in C$, where $e_i$ is the $i$-th unit vector in the standard basis of $\R^{n}$
  \item For any $(v,\lambda), (v',\lambda')\in \mathcal{C}$, let $\gamma:[0,1]\to C_0(Y)$ be a geodesic from $v$ to $v'$, then $(\gamma(t),(1-t)\lambda + t\lambda') \in \mathcal{C}$ for all $t\in[0,1]$
\end{enumerate}
The projections of this correspondence to $K$ and $\Delta_n$ are surjective but may not be injective.

If a finite group $G$ acts on the points $v_i$, then it induces an action on $\mathcal{C}$ by
$g \cdot (v,(\lambda_1,\cdots, \lambda_n)) = (g\cdot v,(\lambda_{\sigma(1)}, \cdots , \lambda_{\sigma(n)}))$
where $\sigma$ is a permutation on the indices induced by $g$ such that $\sigma(j) = i $ when $g\cdot v_i = v_j$. It can be seen that $(g\cdot v,(\lambda_{\sigma(1)}, \cdots , \lambda_{\sigma(n)}) \in \mathcal{C}$ for any $g\in G$ and $(v,(\lambda_1,\cdots, \lambda_n)) \in\mathcal{C}$.

Suppose $o$ is the circumcenter of the points $v_i$, then it is in the closure $\bar K$ of the convex hull, so for any $\epsilon > 0$ there exists a point $p\in K$ such that $\| p\| < \epsilon$, thus $\mathcal{C}$ contains $(p,(\lambda_1,\cdots \lambda_n))$. Since the action of $G$ on $C_0(Y)$ stabilized the set $\{v_i\}$, it must fix the unique circumcenter $o$. Hence $\|g\cdot p\| = \|p\|$ for any $g$. Let $G=\{g_j\}_1^m$. Consider $g_j \cdot (p,(\lambda_1,\cdots ,\lambda_n)) =(g_j \cdot p,(\lambda_{\sigma_j(1)},\cdots ,\lambda_{\sigma_j(n)})) $. In the collection of $k$-th coordinates $\sigma_j(k)$, every $\lambda_i$ appears the same number of times, so the average
$$\frac 1 m \sum_{j=1}^m (\lambda_{\sigma_j(1)},\cdots ,\lambda_{\sigma_j(n)})) =\left (\frac 1 n,\cdots \frac 1 n \right )$$

Now define successively $p_1 = g_1\cdot p$, $p_{k+1} = \gamma_k(\frac 1 {k+1})$ where $ \gamma_k:[0,1]\to C_0(Y)$ is a geodesic from $p_k$ to $g_{k+1}\cdot p$. $p_1$ has as its corresponding $n$-tuple $(\lambda_{\sigma_1(1)},\cdots ,\lambda_{\sigma_1(n)}))$.  Suppose the corresponding $n$-tuple of $p_k$ is $\frac 1 k\sum_{j=1}^k (\lambda_{\sigma_j(1)},\cdots ,\lambda_{\sigma_j(n)})$, i.e. $(p_k,\frac 1 k\sum_{j=1}^k (\lambda_{\sigma_j(1)},\cdots ,\lambda_{\sigma_j(n)})) \in \mathcal{C}$, then the corresponding $n$-tuple of $p_{k+1}$ is
\begin{align*}
& \frac k {k+1} \cdot \frac 1 k\sum_{j=1}^k (\lambda_{\sigma_j(1)},\cdots ,\lambda_{\sigma_j(n)}) + \frac 1 {k+1} (\lambda_{\sigma_{k+1}(1)},\cdots ,\lambda_{\sigma_{k+1}(n)})  \\
=& \frac 1 {k+1} \sum_{j=1}^{k+1} (\lambda_{\sigma_j(1)},\cdots ,\lambda_{\sigma_j(n)})
\end{align*}
Thus $(p_k,\frac 1 k\sum_{j=1}^k (\lambda_{\sigma_j(1)},\cdots ,\lambda_{\sigma_j(n)})) \in \mathcal{C}$ for all $k$, so $\left(p_m, \left(\frac 1 n,\cdots \frac 1 n \right) \right)\in \mathcal{C}$. If $\|p_k\|, \|g_{k+1}\cdot p\| <\epsilon$, then the CAT(0) inequality on $\triangle(0,p_k,g_{k+1}\cdot p)$ implies that any point on the segment $[p_k,g_{k+1}\cdot p]$ has norm less than $\epsilon$. Therefore we have $\|p_k\| <\epsilon$ for all $k$. Applying Proposition \ref{LSineq} to $\left(p_m,\left (\frac 1 n,\cdots \frac 1 n \right)\right)$, we have
$\epsilon^2 > \|p_m\|^2 \ge \sum_{i,j=1}^n (\frac 1 n)^2 \inner{v_i,v_j}$
Thus
\begin{align}\label{ineq}
0 \ge \sum_{i,j=1}^n \inner{v_i,v_j}
\end{align}

Let  $G$ be the symmetric group of $W$ acting on $C_0(Y)$ by isometries.  If $g \cdot [\bar {x_i},\bar {x_j}] =[\bar {x_{i'}},\bar {x_{j'}}]$ for (unoriented) segments $[\bar {x_i},\bar {x_j}]$ and $[\bar {x_{i'}},\bar {x_{j'}}]$ in $W$, then $g\cdot [v_i,v_j] = [v_{i'},v_{j'}]$ for segments $[v_i,v_j]$ and $[v_{i'},v_{j'}]$ in $C_0(Y)$, so $\inner{v_i,v_j}=\inner{v_{i'},v_{j'}}$.

We are going to prove Theorem \ref{anglepolyhedra} for each case of $W$ ,but we will leave the equality case to the next section.

\begin{proof}[Proof for orthoplex]
Suppose that $W$ is a $k$-dimensional orthoplex. Consider the  $k$-dimensional orthoplex in $\R^k$ with vertices $\{\pm e_i\}_{i=1}^k$, where $\{e_i\}$ is the standard basis of $\R^k$. There are two orbits of chords under the symmetry group action of the orthoplex, one consists of pairs of opposite vertices $(e_i, -e_i)$, another consists of the edges. Label the vertices of the orthoplex as$x_i=e_i$ and $x_{k+i} = -e_i$ for $1\le i \le k$, and label the $2k$ points in $C_0(Y)$ corresponding to $x_i$ and $x_{k+i}$ as $v_i$ and $v_{k+i}$ respectively, on which $G$ acts in the same way as on the vertices of the orthoplex.

From the inequality \ref{ineq},
\begin{align*}
0 &\ge \sum_{i,j=1}^{2k} \inner{v_i,v_j} = 2k+ 2k(2k-2) \inner{v_1,v_2} + 2k\inner{v_1,v_{n+1}} \\
&= 2k(1+ (2k-2) \cos\angle_o(v_1,v_2) + \cos\angle_o(v_1,v_{n+1})) \\ &\ge 2k(1+ (2k-2) \cos\angle_o(v_1,v_2) +(-1)) \\
&=2k(2k-2) \cos\angle_o(v_1,v_2)
\end{align*}
Hence $\angle_o(v_1,v_2) \ge \frac \pi 2 = \angle_{\bar c}(x_1,x_2)$.
\end{proof}

\begin{proof}[Proof for hypercube]
Suppose $W$ is a $k$-dimensional hypercube. Consider the $k$-dimensional  hypercube in $\R^k$ with $2^k$ vertices $v_i=(t^{(i)}_1,\cdots ,t^{(i)}_k)$ where each $t^{(i)}_i$ is either $\pm 1$. There are $k$ orbits of chords under the symmetry group action of the hypercube; each class consists of pairs of vertices differing in the same number of coordinates. Edges of the hypercubes are pairs of vertices differing in only one coordinate. These edges make an angle $\arccos (1-2/k)$ at the center. Assume there are $2^k$ points $v_i$ in $C_0(Y)$ acted on in the same way by the group $G$. For any two points corresponding to two vertices of the hypercube differing in $j$ coordinates, where $1 \le j \le k$, let $a_j$ be the distance between them and $\alpha_j$ be the angle they make at the circumcenter $o$.

We will need the following simple inequality.
\begin{lemma}
Let $X$ be a complete CAT(0) space, $\{x_i\}$ be a finite set of points in $X$, $c$ be the circumcenter of $\{x_i\}$, and $\angle_c(x_1,x_2) \ge \alpha$. If all the points $x_i$ are at an equal distance from $c$, then $$d(x_1,x_2)^2 \ge \operatorname{diam}(\{x_i\})^2 (1-\cos \alpha)/2.$$
\end{lemma}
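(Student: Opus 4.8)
The plan is to reduce the statement to a single application of the CAT(0) law of cosines at the circumcenter $c$, followed by the elementary bound relating the circumradius to the diameter. The equidistance hypothesis is what makes both reductions clean: it lets me write the two sides at $c$ as a common value $R$, and it forces the diameter to be controlled by $R$.

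First I would set $R := d(c,x_i)$, the common distance from the circumcenter to every point, which is well defined by hypothesis. The CAT(0) comparison inequality in its law-of-cosines form, applied to the geodesic triangle $\triangle(c,x_1,x_2)$, gives
$$d(x_1,x_2)^2 \geq d(c,x_1)^2 + d(c,x_2)^2 - 2\,d(c,x_1)\,d(c,x_2)\cos\angle_c(x_1,x_2) = 2R^2\left(1-\cos\angle_c(x_1,x_2)\right).$$
Since $\alpha \leq \angle_c(x_1,x_2) \leq \pi$ and cosine is decreasing on $[0,\pi]$, we have $1-\cos\angle_c(x_1,x_2) \geq 1-\cos\alpha$, and hence $d(x_1,x_2)^2 \geq 2R^2(1-\cos\alpha)$.

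It then remains to replace $2R^2$ by $\operatorname{diam}(\{x_i\})^2/2$. Because every $x_i$ lies at distance $R$ from $c$, the triangle inequality yields $d(x_i,x_j) \leq 2R$ for all $i,j$, so $\operatorname{diam}(\{x_i\}) \leq 2R$ and therefore $2R^2 \geq \operatorname{diam}(\{x_i\})^2/2$. Chaining this with the previous inequality gives the claimed bound.

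I expect no serious obstacle. The only points demanding care are getting the direction of the CAT(0) law of cosines right — one must invoke that the Alexandrov angle is bounded above by the comparison angle, which is exactly what converts the Euclidean comparison equality into a \emph{lower} bound on $d(x_1,x_2)$ — and checking that the cosine monotonicity is legitimate, which it is since every Alexandrov angle lies in $[0,\pi]$ and hence so does $\alpha$. The equidistance assumption is used in precisely two places: to collapse both sides of the triangle at $c$ to the single value $R$, and to produce the uniform bound $\operatorname{diam}(\{x_i\}) \leq 2R$.
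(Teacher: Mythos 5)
Your proof is correct and is essentially the paper's own argument: the paper likewise sets the common radius $r$, invokes the comparison triangle at $c$ to obtain $d(x_1,x_2)^2 \ge 2r^2(1-\cos\alpha)$, and finishes with $\operatorname{diam}(\{x_i\}) \le 2r$. You have simply spelled out the comparison-angle and cosine-monotonicity steps that the paper leaves implicit.
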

\begin{proof}
Let $d(x_i,c) = r$. Using the comparison triangle, we see that $$d(x_1,x_2)^2 \ge 2 r^2 (1-\cos \alpha).$$ Since $\operatorname{diam}(\{x_i\}) \le 2r$, we have the result.
\end{proof}

We will do an induction on the dimension $k$ to prove that $\alpha_1 \ge \arccos(1-2/k)$. The case $k=2$ has already been proved in Corollary \ref{polyineq}. Assume that the assertion is true for $k-1$. For $2 \le m < k$, the symmetry group of the $m$-hypercube is embedded in that of the $k$-hypercube as a subgroup acting on a $m$-dimensional subspace of $\R^k$. By induction hypothesis, the vertices of a $m$-dimensional face of the hypercube satisfy the angle inequality for the $m$-hypercube. Let $V_m$ be the vertex set of this hypercube, $v_i$ and $v_j$ be an edge of it. The above lemma gives
\begin{align*}
a_1^2 &=d(v_i,v_j)^2\ge \operatorname{diam}(V_m)^2 (1-\cos \alpha_1)/2 \\
&\ge a_m^2 (1-\cos(\arccos(1-2/m))/2 = a_m^2 /m.
\end{align*}

Since triangles $\triangle(o,v_i,v_j)$ are flat in $C_0(Y)$, if $d(v_i,v_j)=a_m$ and $\angle_o(v_i,v_j)=\alpha_m$, then by cosine law $\cos\alpha_m=(1-a_m/2)$. From the inequality \ref{ineq},
\begin{align*}
0 &\ge \sum_{i,j=1}^{2^k} \inner{v_i,v_j} =2^k + \sum_{j=1}^k 2^k \binom k j \cos \alpha_j \\
&=2^k + \sum_{j=1}^k 2^k \binom k j \left (1-\frac {a_j^2} 2\right) \\
&\ge 2^k + \sum_{j=1}^k 2^k \binom k j \left (1-\frac {j a_1^2} 2 \right) \\
&=2^k(2^k-k \cdot 2^{k-2}a_1^2)
\end{align*}
Hence $a_1^2 \ge 4/k$, and so $\alpha_1 \ge \arccos(1-2/k)$, which is the corresponding angle in the $k$-dimensional hypercube.
\end{proof}

\begin{proof}[Proof for icosahedron]
Suppose $W$ is an icosahedron. There are three orbits of chords in the regular icosahedron, the representatives of which are $(\bar{x_0},\bar{x_1})$, $(\bar{x_0},\bar{x_2})$, $(\bar{x_0},\bar{x_3})$, as shown in Figure \ref{ico1}. Denote the lengths of $(v_0,v_1)$, $(v_0,v_2)$, $(v_0,v_3)$ in $C_0(Y)$ as  $a_1, a_2, a_3$ respectively, and the angles they make with the circumcenter as $\alpha_1, \alpha_2, \alpha_3$ respectively. The stabilizer group $C_5$ of $v_{h_0}$ has an orbit  $\{v_{h_1},v_{h_2},v_{h_3},v_{h_4},v_{h_5}\}$ in Figure \ref{ico2}. (We illustrate in this figure only the corresponding vertices $\bar{x_i}$ in $W$ of $v_i$ but omit the vertices $v_i$ in $C_0(Y)$, and we will do the same for figures in the next case.) Then we have $a_2 =d(v_{h_1},v_{h_3}) \le 2 \cos(\pi/5) a_1$ by Corollary \ref{polyineq}.

\begin{figure}[h]
\centering
\def\svgwidth{0.3\columnwidth}
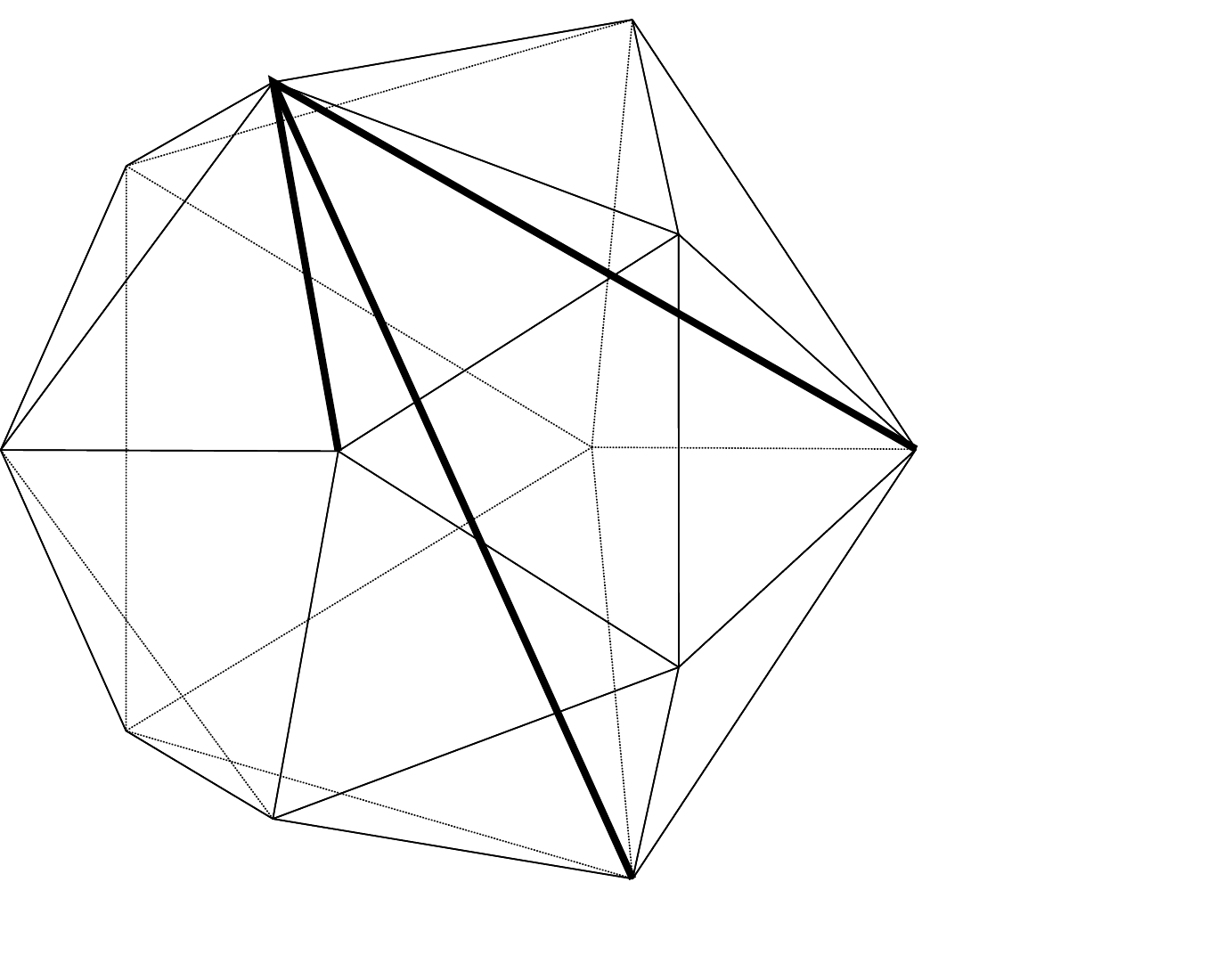
\caption{Representatives of chord orbits in an icosahedron}
\label{ico1}
\end{figure}

\begin{figure}[h]
\centering
\def\svgwidth{0.3\columnwidth}
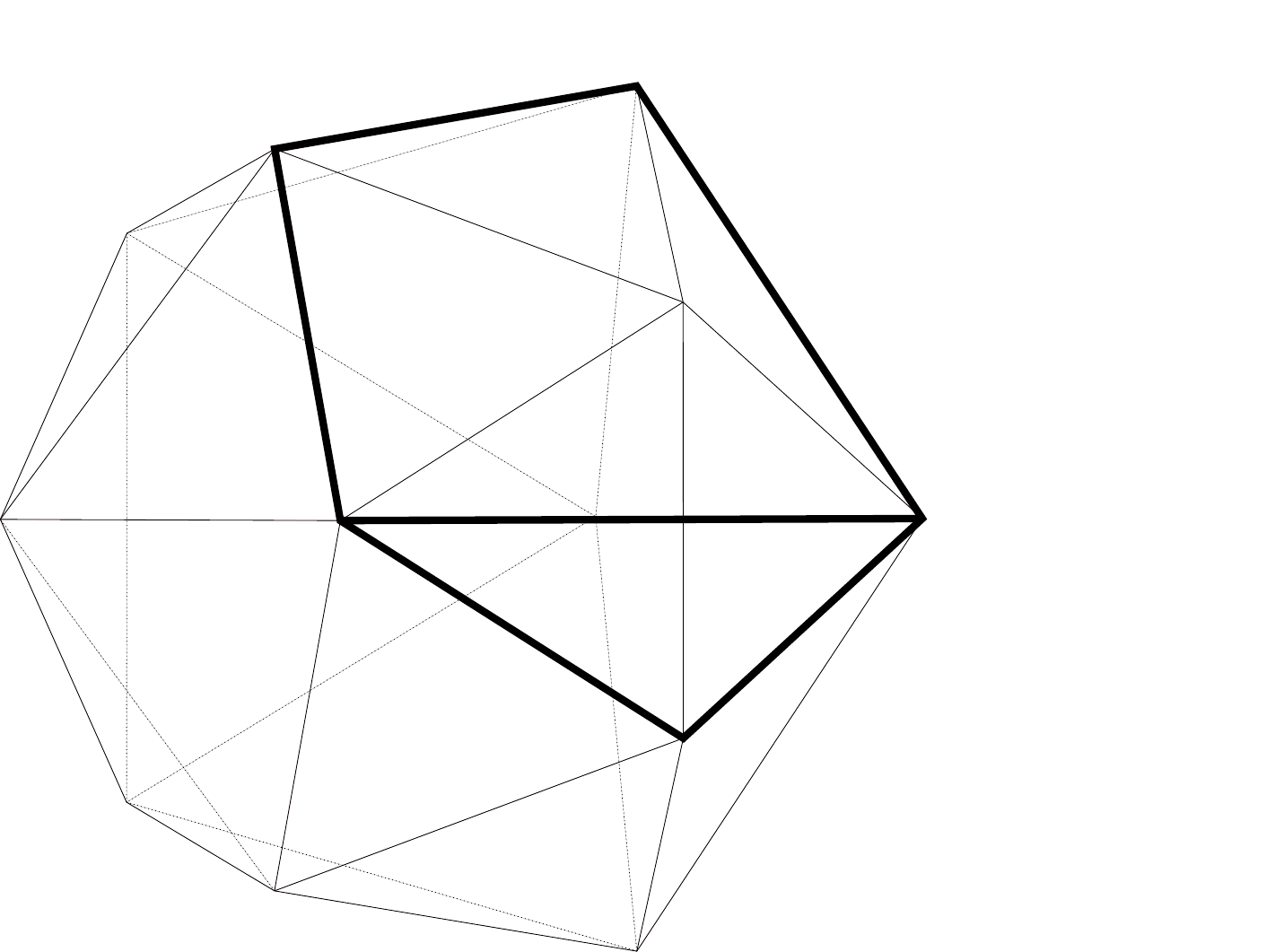
\caption{The pentagon $\bar{x_{h_1} x_{h_2} x_{h_3} x_{h_4} x_{h_5}}$}
\label{ico2}
\end{figure}
From the inequality \ref{ineq},
\begin{align*}
0 &\ge \sum_{i,j=1}^{12} \inner{v_i,v_j} =12 + 60 \cos \alpha_1 + 60 \cos \alpha_2 +12 \cos \alpha_3\\
&=12 +60\left(1-\frac {a_1^2} 2\right)+60\left(1-\frac {a_2^2} 2\right)+12 \cos \alpha_3 \\
&\ge 12 + 60\left(1-\frac {a_1^2} 2\right)+60\left(1-\frac {(2\cos(\pi/5)a_1)^2} 2\right)+12(-1) \\
&=120-30a_1^2 \left(4\cos^2 \frac \pi 5+1\right)
\end{align*}
Hence $a_1^2 \ge 4/(1+4\cos^2(\frac \pi 5)) = 2-2/\sqrt 5$, and so $\alpha_1 \ge \arccos(1/\sqrt 5)$, which is the corresponding angle in the regular icosahedron.
\end{proof}

\begin{proof}[Proof for dodecahedron]
Suppose that $W$ is a dodecahedron. There are six orbits of chords in the regular icosahedron, the representatives of which are $(\bar{x_0},\bar{x_1})$, $(\bar{x_0},\bar{x_2})$, $(\bar{x_0},\bar{x_3})$, $(\bar{x_0},\bar{x_4})$, $(\bar{x_0},\bar{x_5})$ and $(\bar{x_0},\bar{x_6})$, as shown in Figure \ref{ico1}. Denote the lengths of the corresponding chords $(v_0,v_i)$ in $C_0(Y)$ as $a_i$, and the angles they make with the circumcenter as $\alpha_i$.

\begin{figure}[h]
\centering
\def\svgwidth{0.25\columnwidth}
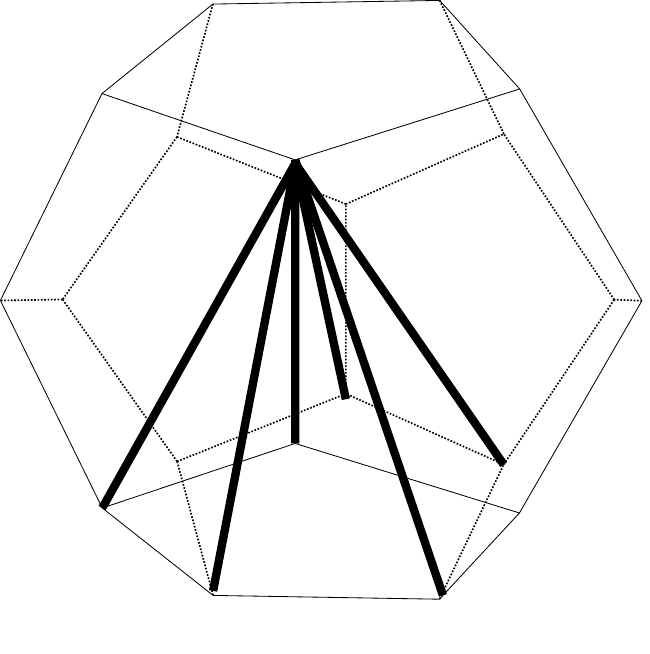
\caption{Representatives of chord orbits in a dodecahedron}
\label{dodec1}
\end{figure}

In Figure \ref{dodec2}, the set of vertices $\{v_{h_1},v_{h_2},v_{h_3},v_{h_4},v_{h_5}\}$ is stabilized by the subgroup $C_5$, hence  $a_2 =d(v_{h_1},v_{h_3}) \le 2 \cos(\pi/5) a_1$.

\begin{figure}[h]
\centering
\def\svgwidth{0.3\columnwidth}
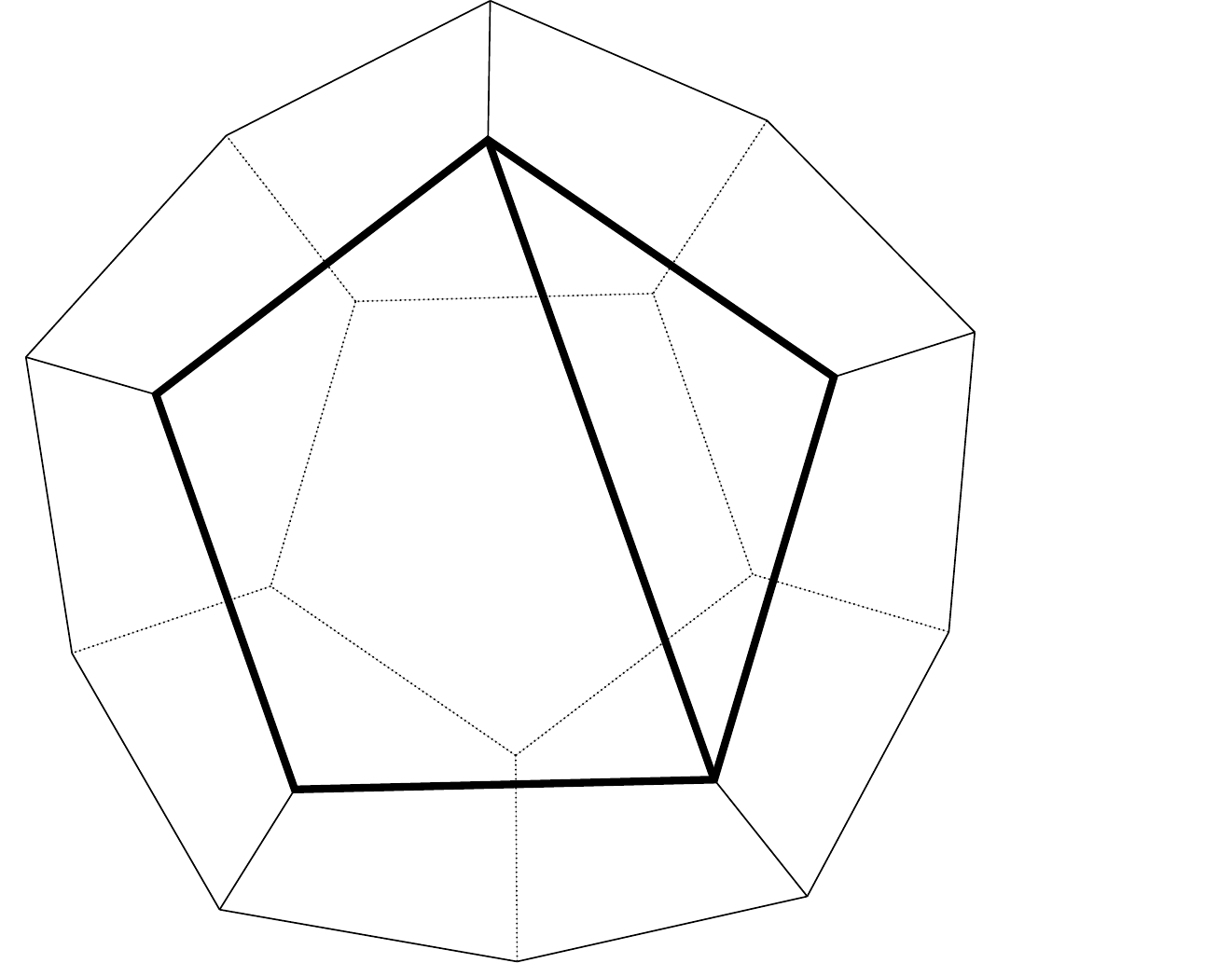
\caption{The pentagon $\bar{x_{h_1}} \bar{x_{h_2}} \bar{x_{h_3}} \bar{x_{h_4}} \bar{x_{h_5}}$}
\label{dodec2}
\end{figure}

The same holds for the set of vertices  $\{v_{i_1},v_{i_2},v_{i_3},v_{i_4},v_{i_5}\}$, as shown in Figure \ref{dodec3}, hence $a_5 =d(v_{i_1},v_{i_3}) \le 2 \cos(\pi/5) a_2 \le  4 \cos^2(\pi/5) a_1$.

\begin{figure}[h]
\centering
\def\svgwidth{0.35\columnwidth}
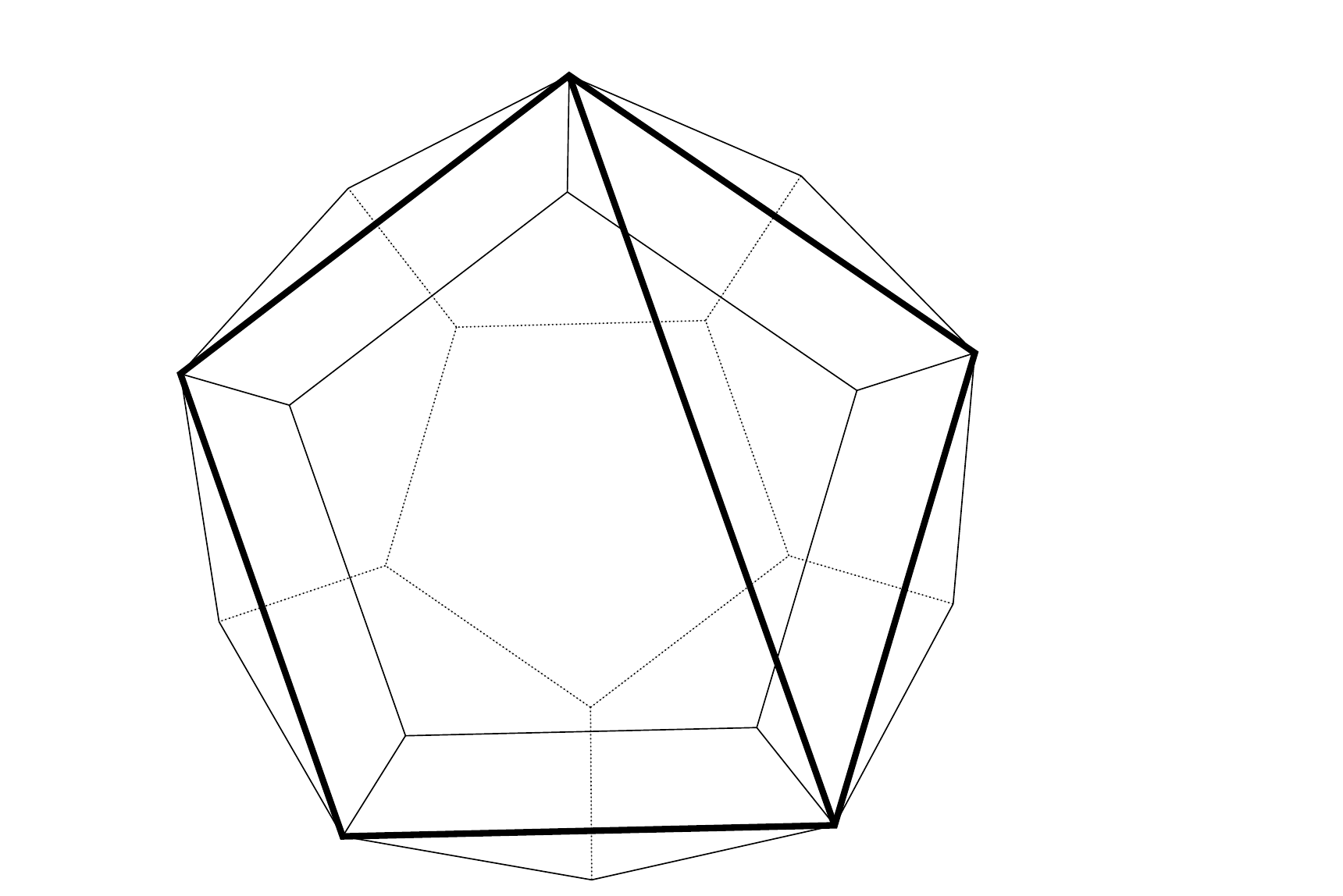
\caption{The pentagon $\bar{x_{i_1}} \bar{x_{i_2}} \bar{x_{i_3}} \bar{x_{i_4}}\bar{x_{i_5}}$}
\label{dodec3}
\end{figure}

 In Figure \ref{dodec4}, since all the four chords of  the quadrilateral $\{v_{j_0},v_{j_1},v_{j_2},v_{j_3}\}$ have equal length, we may apply the parallelogram inequality for CAT(0) space to it and get

\begin{align*}
a_3^2+a_4^2&=d(v_{j_0},v_{j_2})^2 + d(v_{j_1},v_{j_3})^2 \\
 &\le d(v_{j_0},v_{j_1})^2 + d(v_{j_1},v_{j_2})^2+d(v_{j_2},v_{j_3})^2 + d(v_{j_3},v_{j_0})^2 \\
 &= 4 a_2^2 \le 16 \cos \left(\frac \pi 5\right) a_1^2
\end{align*}

\begin{figure}[h]
\centering
\def\svgwidth{0.25\columnwidth}
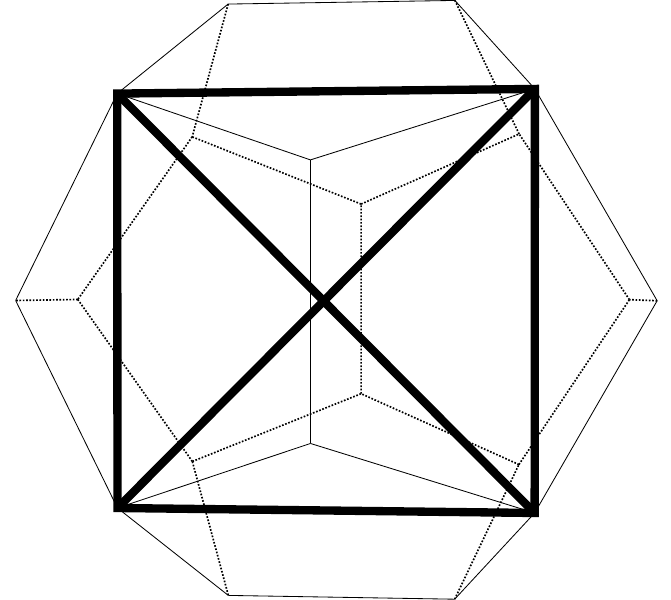
\caption{The quadrilateral $\bar{x_{j_0}}\bar{x_{j_1}} \bar{x_{j_2}} \bar{x_{j_3}} $}
\label{dodec4}
\end{figure}

From the inequality \ref{ineq},
\begin{align*}
0 &\ge \sum_{i,j=1}^{20} \inner{v_i,v_j}\\ &= 20(1 + 3\cos\alpha_1 + 6\cos\alpha_2+3\cos\alpha_3+3\cos\alpha_4+3\cos\alpha_5+\cos\alpha_6)\\
&\ge  20(1 + 3\cos\alpha_1 + 6\cos\alpha_2+3\cos\alpha_3+3\cos\alpha_4+3\cos\alpha_5+(-1))\\
&=20\left(3\left(1-\frac {a_1^2} 2\right)+6\left(1-\frac {a_2^2} 2\right) + 3\left(2-\frac {a_3^2} 2-\frac {a_4^2} 2\right)+3\left(1-\frac {a_2^5} 2\right) \right) \\
&\ge 20\left(18-\frac 3 2 a_1^2 -12 \cos^2\left(\frac\pi 5\right) a_1^2 -24 \cos^2\left(\frac\pi 5 \right) a_1^2-24\cos^4\left(\frac\pi 5\right) a_1^2\right)
\end{align*}
Then $a_1^2 \ge 2-(2\sqrt 5)/3$, so $\alpha_1 \ge \arccos(1-a_1^2/2) = \arccos(\sqrt 5/3)$, which is the corresponding angle in the regular dodecahedron.
\end{proof}
This gives the proof of Theorem \ref{anglepolyhedra} except the last statement.

\subsection{Equality case}
We want to show that for each of these cases, if the equality is attained, then the convex hull of the set of vertices is isometric to the corresponding regular polytope $W$. We will deal with the convex hull in $C_0(Y)$ first, which will imply the same for the convex hull in $X$. To prove the result in $C_0(Y)$, first we show that the convex hulls corresponding to the bounding faces of $W$ are isometric to the faces, secondly, that the subpolytopes which these convex hull make with the origin $o$ are isometric to corresponding ones of $W$, and, lastly, that the union of these subpolytopes is isometric to $W$.

For a set $F$ and a point $p$ in $C_0(Y)$, we define the ``cone'' $\mathrm{Cone}(F,p)$ as the union of $F$ and all geodesic segments joining $x\in F$ to $p$. It need not be homeomorphic to the topological cone of $F$.

\begin{proposition}\label{FlatBaseConeIsometricSide}
Suppose that $P$ is a compact convex subset in $C_0(Y)$ isometric to  $\bar P$ in $\R^k$ for some $k$, the origin $o \not\in P$, and for all $x \in \pa P$, $$d(x,o) = d(\bar x ,\bar c),$$ where $\bar c$ is the origin of $\R^k$. Then $\mathrm{Cone}(P,o)$ is isometric to $\mathrm{Cone}(\bar P, \bar c)$.
\end{proposition}
\begin{proof}
For any points $p_1,p_2 \in P$, extend the segment $[p_1,p_2]$ to one with endpoints in $\pa P$. By assumption the flat triangle formed with this segment and $o$ is isometric to the corresponding one in $\mathrm{Cone}(\bar P, \bar c)$, thus the  flat triangles $\triangle(p_1,p_2,o)$ and $\triangle(\bar {p_1},\bar {p_2}, \bar c)$ are isometric. Any two points in $\mathrm{Cone}(P,o)$ lie in some triangle $\triangle(p_1,p_2,o)$, hence the result.
\end{proof}

The following proposition gives a condition to check when two adjacent flat $k$-dimensional polytopes embedded in a CAT(0) space are not ``folded'' along their intersection, as might be the case if they are in a Euclidean space of dimensional higher than $k$.

\begin{proposition}\label{AdjacentPolyhedra}
Suppose that $P_1$ and $P_2$ are two closed convex subsets in a complete CAT(0) with $P_1 \cap P_2 = F$, and there is an isometry from $P_1 \union P_2$ into $\R^k$ sending $P_1$, $P_2$ and $F$ to $k$-dimensional polytopes $\bar P_1$, $\bar P_2$ and $(k-1)$-dimensional polytope $\bar F$ respectively. Assume that there is a segment $[w_1,w_2]$ in  $P_1\union P_2$ with $w_i$ in $P_i \setminus F$ respectively, such that $[w_1,w_2]$ intersects $F$ at point $p_1$, and the corresponding segment $[\bar{w_1},\bar{w_2}]$ also intersects $\bar F$. Then for any $x_1 \in P_1$ and $x_2 \in P_2$ such that the corresponding segment $[\bar x_1, \bar x_2]$ intersect $\bar F$ at $\bar q$, the union of segments $[x_1, q]\union [q,x_2]$ is a geodesic segment.
\end{proposition}

\begin{proof}
Since  local geodesics are geodesics in CAT(0) spaces, it suffices to prove that the path $[x_1, q]\union [q,x_2]$ is a local geodesic at $q$, so we may move $x_1$ and $x_2$ closer to $q$, so that $\bar q$ is the midpoint of $[\bar{x_1},\bar{x_2}]$.

We start by noting that the segment $[\bar{w_1},\bar{w_2}]$ in $\bar{P_1}\union\bar{P_2}$ intersects $\bar F$   at $\bar{p_1}$. Indeed if they intersect at point $\bar{p'_1}$, then
\begin{align*}
d(w_1,w_2) &\le d(w_1,p'_1)+d(p'_1,w_2) \\
&=d(\bar{w_1},\bar{p'_1})+d(\bar{p'_1},\bar{w_2}) =d(\bar{w_1},\bar{w_2}) \\
&\le d(\bar{w_1},\bar{p_1})+d(\bar{p_1},\bar{w_2}) = d(w_1,p_1)+d(p_1,w_2) =d(w_1,w_2).
\end{align*}
Thus $\bar{p'_1}=\bar{p_1}$.

We claim that if $w_3 \in  P_1$ such that the corresponding segment $[\bar{w_3}, \bar{w_2}]$ intersects $\bar F$ at $\bar p_2$, then $[w_3, p_2] \union [p_2, w_2]$ is a geodesic. If not, then $[w_3,w_2]$ is shorter than $[\bar{w_3}, \bar{w_2}]$, so the comparison triangle of $\triangle(w_1,w_2,w_3)$ will have a smaller comparison angle at $w_1$ than the angle at $\bar{w_1}$ of the flat triangle $\triangle(\bar{w_1}, \bar{w_2},\bar{w_3})$. Since the comparison angle is no less than the original angle, this implies that $\angle_{w_1}(w_2,w_3) < \angle_{\bar{w_1}}(\bar{w_2},\bar{w_3})$, but $$\angle_{w_1}(w_2,w_3) = \angle_{w_1}(p_1,w_3) =\angle_{\bar{w_1}}(\bar{p_1},\bar{w_3}) = \angle_{\bar{w_1}}(\bar{w_2},\bar{w_3}),$$ a contradition. Hence the claim.

Now we assume that $p_1$ is the midpoint of $[w_1,w_2]$ by shortening  $[w_1,w_2]$, then  $\bar p_1$ is also the midpoint of $[\bar{w_1}, \bar{w_2}]$. Take points $w_1, w_2, \ldots, w_7=x_1, w_8=x_2$, where $w_j$ are alternatively in $P_1\setminus F$ or $P_2\setminus F$ for $1\le j \le 8$, with the following properties: $[\bar{w_j}, \bar{w_{j+1}}]$ intersect $\bar F$ at $\bar{p_j}$; the piecewise geodesic path $[\bar{p_1}, \bar{p_3}]\union[\bar{p_3}, \bar{p_5}]\union[\bar{p_5}, \bar{p_7}]$ is contained in $\bar F$;  $[\bar{w_3},\bar{w_4}]$ and $[\bar{w_5},\bar{w_6}]$ have  midpoints $\bar{p_3}$ and $\bar{p_5}$ and are parallel to $[\bar{w_1},\bar{w_2}]$ and $[\bar{w_7},\bar{w_8}]$ respectively; $\bar{w_3},\bar{w_4}, \bar{w_5},\bar{w_6}$ are contained in a ball separated by $\bar F$ . (The last property can be satisfied by shortening $[\bar{w_3},\bar{w_4}]$ and $[\bar{w_5},\bar{w_6}]$ if needed.) With these properties, $\bar{p_2}$ and $\bar{p_6}$ lie on $[\bar{p_1}, \bar{p_3}]$ and $[\bar{p_5}, \bar{p_7}]$ respectively, and $\bar{p_4}$ lie in the intersection of the ball and $\bar F$. So we can apply the claim successively to points $w_j, w_{j+1}, w_{j+2}$ to get the result.
\end{proof}
Note that  if we assume instead that $d(w_1,w_2) = d(\bar{w_1},\bar{w_2})$ and $[\bar{w_1},\bar{w_2}]$ intersects $\bar F$, then this implies that $[w_1,w_2]$ intersects $F$, satisfying the condition in this proposition.

\begin{lemma}\label{FlatBaseConeEqualDist}
Let $P$ be a closed convex set  in a complete CAT(0) space. Suppose that there is a cone $\mathrm{Cone}(P,p)$, and a map $f: \mathrm{Cone}(P,p) \to \R^k$, such that $f|_P$ is an isometry, and $d(p,x) = d(f(p),f(x))$ for all $x\in P$, then $f$ is an isometry from $\mathrm{Cone}(P,p)$ to its image.
\end{lemma}
\begin{proof}
For any $x_1,x_2 \in P$, the triangle $\triangle(p,x_1,x_2)$ has equal side lengths as  the flat triangle $\triangle(f(p),f(x_1),f(x_2))$. Since $d(p,x) = d(f(p),f(x))$ for any point on $[x_1,x_2]$, so  $\triangle(p,x_1,x_2)$ is flat. The result follows.
\end{proof}

In the proof of the last statement of Theorem \ref{anglepolyhedra} for these cases we will assume that $W$ has radius 1.

\begin{proof}[Proof for orthoplex]
Suppose $W$ is a $k$-dimensional orthoplex. We prove by induction on $k$. When $k=2$ it follows from Corollary \ref{polyineq}. Assume it holds for dimension$k-1$. If the equality holds for the $k$-dimensional orthoplex, then $d(v_i, v_{k+i}) =2$ for all $1\le i\le k$. Removing a pair of opposing points $v_k, v_{2k}$ from the set of vertices, the convex hull $K$ of the remaining vertices is a flat $(k-1)$-dimensional orthoplex by induction. Take any $p\in K$, then $p$ is a convex combination of the $2(k-1)$ vertices, and for any of them, say $v_i$, we have  $\inner{v_k, v_i} = 0$. Thus by concavity of the inner product, we have $$\inner{v_k, p} \ge 0,$$ implying that the angle $\angle_o(v_k,p) \le \frac \pi 2.$ Similarly $\angle_o(v_{2k},p) \le \frac \pi 2.$
The equality case implies that $d(v_k,v_{2k}) = 2$, i.e. $\angle_o(v_k,v_{2k})=\pi$, so by the triangle inequality for angles,
$$\angle_o(v_k,p) = \angle_o(v_{2k},p) = \frac \pi 2.$$

Now $\triangle(v_k,p,o)$ is a flat right-angled triangle, so is congruent to the comparison triangle $\triangle(x_k,\bar p, \bar c)$ in $W$, hence $d(v_k,p) = d(x_k,\bar p)$. By Lemma \ref{FlatBaseConeEqualDist}, the cone formed with vertex $v_k$ and base the  flat $k-1$-dimensional orthoplex is isometric to the upper half part of $W$. Likewise, the cone formed with vertex $v_{2k}$ and base the  flat $(k-1)$-dimensional orthoplex is isometric to the lower  half part of $W$ (whereby upper and lower we mean positive and negative $k$-th coordinate). As $d(v_k,v_{2k}) = 2 = d(x_k, x_{2k})$, this pair of vertices satisfies the condition of Proposition \ref{AdjacentPolyhedra}, so the union of these two half-orthoplexes in $C_0(Y)$ is isometric to $W$.
\end{proof}

\begin{lemma}\label{GlueIsomAdjPolyhedra}
Let $P$ be a closed subset in a complete CAT(0) space made up of flat convex subpolytopes $P_1,\cdots, P_j$of dimension $k$, $P'$ be a flat convex polytope made up of convex subpolytopes $P'_1,\cdots,P'_j$. Suppose that there is a bijective map $f$ from $P$ to $P'$ such that  $f$ maps $P_i$ isometrically to $P'_i$ for any $i$, and the union of any two adjacent subpolytopes in $P$ isometrically to two adjacent subpolytopes in $P'$, where two subpolytopes are adjacent when they have a $(k-1)$-dimensional intersection. Then $f$ is an isometry from $P$ to $P'$.
\end{lemma}
\begin{proof}
For any $x$, $y$ in $P$, consider the segment $[f(x), f(y)]$ in $P'$. Assume first that whenever this segment passes between two subpolytopes of $P'$, these two are adjacent. By the assumption, the preimage of this segment is a local geodesic in $P$, so is a geodesic, hence $d(x,y)=d(f(x),f(y))$. If the assumption does not hold, take a sequence $y_n$ converging to $y$ such that the assumption holds for each $[f(x), f(y_n)]$. This is possible since there are only a finite number of faces of codimension at least 2 between $x$ and $y$. Then $d(x,y_n) = d(f(x),f(y_n))$, and passing to the limit we have $d(x,y)=d(f(x),f(y))$. Hence the result.
\end{proof}

\begin{proof}[Proof for dodecahedron]
Suppose $W$ is a dodecahedron with radius 1. The equality and Corollary \ref{polyineq} imply that every five vertices that correspond to a pentagonal face of $W$ form the vertices of a flat pentagon in $C_0(Y)$. In addition, triangle formed by $o$ and any edge of the pentagon is flat, and so by the equality is isometric to the comparison triangle in $W$. Then Proposition \ref{FlatBaseConeIsometricSide} implies that the cone formed by these flat pentagon with $o$ is isometric to the cone in $W$. The equality case also implies that for any two adjacent cones, the distance of the two vertices that are directly opposite to the common edge of the pentagons satisfy the condition for Poposition \ref{AdjacentPolyhedra}. Hence by Lemma \ref{GlueIsomAdjPolyhedra}, these vertices form a flat dodecahedron isometric to $W$.
\end{proof}

\begin{proof}[Proof for icosahedron]
Suppose $W$ is an icosahedron. Take any four vertices, say $v_1, v_2, v_3, v_4$, that correspond to vertices, say $\bar{x_1}, \bar{x_2}, \bar{x_3}, \bar{x_4}$, of two adjacent triangular faces of $W$, of which $v_2$ and $v_3$ are the common vertices. Let $m$ and $\bar m$ be the midpoints of the segments $[v_2,v_3]$ and $[\bar{x_2}, \bar{x_3}]$ respectively.  Then by CAT(0) inequality  $d(v_i,m) \le d(\bar{x_i}, \bar m)$ for $i=1,4$. Since $d(v_i,o) = d(\bar{x_i},\bar c)$ and $d(m,o)= d(\bar m,\bar c)$, so $\angle_o(v_i,m) \le \angle_{\bar c} (\bar{x_i},\bar m)$. But $$\angle_{\bar c} (\bar{x_1},\bar{x_4})=\angle_{\bar c} (\bar{x_1},\bar m)+\angle_{\bar c} (\bar{x_4},\bar m) ,$$ and since $d(v_1,v_4) = d(\bar{x_1},\bar{x_4})$ we have $\angle_{ o} (v_1,v_4)=\angle_{\bar c} (\bar{x_1},\bar{x_4})$, so $$\angle_{o} (v_1,m) + \angle_{o} (v_4,m) \ge \angle_{ o} (v_1,v_4)=\angle_{\bar c} (\bar{x_1},m)+\angle_{\bar c} (\bar{x_4},m).$$ Hence $\angle_{o}(v_i,m) = \angle_{\bar c}(\bar{x_i}, \bar m)$, and so $d(v_i,m) = d(\bar{x_1}, \bar m)$, meaning that $\triangle(v_1,v_2,v_3)$ and $\triangle(v_2,v_3, v_4)$ are flat. Now the pair $v_1$ and $v_4$ satisfy the condition for Proposition \ref{AdjacentPolyhedra}, so by the same reasoning as above we see that these vertices form a flat icosahedron isometric to $W$.
\end{proof}

\begin{proof}[Proof for hypercube]
Suppose $W$ is a hypercube of dimension $k$ and radius 1. The statement is proved for $k=2$. Assume that the statement is true for dimension $k-1$. The stabilizers of the $(k-1)$-dimensional faces are isomorphic to the symmetric group of $(k-1)$-dimensional hypercube. Since lengths of the edges $a_1$ and of the diagonals $a_{k-1}$ of these faces satisfy the equality $a_{k-1} = \sqrt{k-1}a_1$, by Proposition \ref{EqualPolyhedron}, which will be proved below, these faces are isometric to a $(k-1)$-dimensional hypercube with side length $a_1$. Denote any of these hypercubes by $H$. Since the edges of $H$ form  flat triangles with $o$, which are isometric to the respective ones in $W$, then by Proposition \ref{FlatBaseConeIsometricSide} each of its 2-dimensional faces form a flat cone with $o$ isometric to those in $W$. Applying Proposition \ref{FlatBaseConeIsometricSide} successively, each of its $j$-dimensional faces form a flat cone with $o$ for $2\le j \le k-1$ isometric to those in $W$. Then since the distances of pairs of vertices from adjacent $(k-1)$-dimensional hypercubes are equal to the distances of the corresponding ones in $W$, by Proposition \ref{GlueIsomAdjPolyhedra} all these vertices form a flat $k$-dimensional hypercube isometric to $W$.
\end{proof}

This completes the proof of Theorem \ref{anglepolyhedra}. Now we  give a condition that the convex hull of the vertices in the CAT($\kappa$) space $X$ is  isometric to   a regular polytope in $M^2_\kappa$. Recall that  function $\operatorname{sn}:\R \to \R$ is defined by
$$
\operatorname{sn}_\kappa (x):=
                                   \begin{cases}
                                     \sin(\sqrt\kappa x)/\sqrt \kappa & \mbox{if }\kappa > 0; \\
                                     x, & \mbox{if }\kappa = 0; \\
                                     \sinh(\sqrt{-\kappa} x)/\sqrt {-\kappa} & \mbox{if }\kappa < 0.
                                   \end{cases}
$$

We will frequently use this version of the cosine inequality for CAT($\kappa$) spaces.
\begin{lemma}[\cite{LangSchroeder} Lemma 1.3]\label{cosinelaw}
Let $X$ be a CAT($\kappa$) space,$x,y,z\in X$. Let $a=d(y,z)$, $b=d(x,z)$, $c=d(x,y)$, and $\gamma=\angle_z(x,y)$. Assume that $ a,b <\pi/\sqrt\kappa$ and $c \le \pi/\sqrt\kappa$ if $\kappa>0$. Then
$$\sn^2_\kappa \frac c 2 \ge \sn_\kappa^2 \frac{a-b} 2 + \sn_\kappa a \sn_\kappa b \sin^2 \frac \gamma 2.$$
If $X= M^k_\kappa$ then equality holds.
\end{lemma}

Given any of the stated regular polytopes $W$, let $G$ be the symmetry group of the regular polytope. Let $\alpha$ be the lower bound of angle given by Theorem \ref{anglepolyhedra}. In a complete CAT($\kappa$) space $X$ with $G$  acting on a set of points $S=\{x_i\}$ in the same way as the vertices $\{\bar{x_i}\}$ of the regular polytope, and $\operatorname{rad} S < \pi/(2\sqrt\kappa)$ if $\kappa>0$. Let $r=\operatorname{rad} S$,  $a$ be the distance of a pair of points in $S$ corresponding to an edge in $W$.

\begin{proposition}\label{EqualPolyhedron}
Suppose that for any pair of distinct points  $x_i,x_j\in S$,
$$\frac{\operatorname{sn}_\kappa (d(x_i,x_j)/2)} {\sin {({\beta}/2)}}  = \frac{\operatorname{sn}_\kappa (a/2)} {\sin {( {\alpha}/2)}} $$
where $\beta$ is the angle that $\bar{x_i},\bar{x_j}$ make at the circumcenter in $W$. Then the convex hull of $S$ is isometric to a regular polytope of radius $r$ in $M^k_\kappa$.
\end{proposition}
\begin{proof}
For each of the $W$ that we consider, there is a pair of antipodal vertices $x_{i'}, x_{j'}$. Using the equality we have
$$\frac{\operatorname{sn}_\kappa (a/2)} {\sin {( {\alpha} /2)}}  = \operatorname{sn}_\kappa (d(x_{i'}, x_{j'}) /2)\le \operatorname{sn}_\kappa r $$

Let $c$ be the circumcenter of $S$, $\alpha_1$ be the angle of any edge of $S$ made at $c$, then by Theorem \ref{anglepolyhedra} $\alpha_1 \ge \alpha$. Use Lemma \ref{cosinelaw} on the triangle formed by the edge and $c$, we have
$$\operatorname{sn}_\kappa \frac a 2 \ge \operatorname{sn}_\kappa r \sin \frac {\alpha_1} 2 \ge  \operatorname{sn}_\kappa r \sin \frac {\alpha} 2.$$
So these two inequalities are actually an equality. Therefore, by Theorem \ref{anglepolyhedra}  the projection of $S$ to the tangent cone $C_0(\bar{S_c{X}})$ at the circumcenter has a convex hull isometric to the flat regular polytope with radius 1. The intersection of this convex hull and the unit sphere centered at $o$ in $C_0(\bar{S_c{X}})$ is isometric to a Euclidean unit sphere $S^{k-1}$.

We now follow the approach  in the proof of Theorem A in \cite{LangSchroeder}. Consider the $\kappa$-cone  $C_\kappa$ over the space of directions $\bar{S_c{X}}$. Define $f:B(c,r)\to C_\kappa$ by mapping $c$ to the origin $o$ in $C_\kappa$ and $x\in X$ to the point in $C_\kappa$ in the direction of $x$ from $c$ and with  distance   $d(x,c)$. By the CAT($\kappa$) inequality, for any points $x,y \in X$, the comparison triangle $\triangle(\bar c, \bar x, \bar y)$ in $M^2_\kappa$ has an angle at $\bar c$  no less than  $\angle_c(x,y) = \angle_o(f(x),f(y))$, and $\triangle(o,f(x),f(y))$ is isometric to its comparison triangle in $M^2_\kappa$ by definition of a $\kappa$-cone. So
 $$d(f(x),f(y)) \le d(\bar x,\bar y) = d(x,y),$$
i.e. $f$ is 1-Lipschitz.

 For any distinct points $x_i,x_j$ we have
$$\operatorname{sn}_\kappa (d(f(x_i),f(x_j)) /2) = \operatorname{sn}_\kappa (r) \sin (\angle_c(x_i,x_j)/2) =  \operatorname{sn}_\kappa (r) \sin (\beta/2)$$
where $\beta$ is the angle that $\bar{x_i},\bar{x_j}$ make at the circumcenter in $W$, which equals $\angle_c(x_i,x_j)$ by Theorem \ref{anglepolyhedra}. From the equality assumption in the proposition we also get
$$\operatorname{sn}_\kappa (d(x_i,x_j) /2) = \frac {\operatorname{sn}_\kappa (a/2)}{\sin (\alpha/2) } \sin (\beta/2) =  \operatorname{sn}_\kappa (r) \sin (\beta/2)$$
Hence $f$ is an isometry on $S$.

Let $S^*$ be a maximal set in $B(c,r)$ such that $S\subset S^*$ and  $f$ maps $S^*$ isometrically into the convex hull of $f(S)$. We show that $S^*$ is the convex hull of $S$ in the same way as in \cite{LangSchroeder}. For any geodesic $\sigma:[0,1]\to X$ with $\sigma(0),\sigma(1)\in S^*$, $f$ being 1-Lipschitz means that $f\circ \sigma$ is at most as longer as $\sigma$, and since $f$ is an isometry on $S^*$, $f\circ \sigma$  is a geodesic. For any point $x\in S^*$ and $t\in[0,1]$, $d(f(x),f(\sigma(t))) \le d(x,\sigma(t))$ as $f$ is 1-Lipschitz, while applying CAT($\kappa$) inequality on triangle $\triangle(x,\sigma(0),\sigma(1))$ and its comparison $\triangle(f(x),f(\sigma(0)),f(\sigma(1)))$, we have  $d(f(x),f(\sigma(t))) \ge d(x,\sigma(t))$. Therefore $d(f(x),f(\sigma(t)))= d(x,\sigma(t))$, so $\sigma(t)\in S^*$, i.e. $S^*$ is convex. Since $f$ maps geodesic segments in $S^*$ to geodesic segments in $f(S^*)$, so $f(S^*)$ is convex, hence $f(S^*)$ is the convex hull of $f(S)$, and $S^*$ is the convex hull of $S$.  $f(S)$ is contained in the $\kappa$-cone of the unit sphere $S^{k-1} \subset \bar{S_c{X}}$, which is the model space $M^k_\kappa$, and the angle made by any two of the vertices in $f(S)$ equals the angle made by the corresponding vertices in $W$, hence the convex hull of their images is isometric to a regular polytope of radius $r$ in $M^k_\kappa$.
\end{proof}

\begin{theorem}\label{EqualPolyhedronIneq}
Let $X$ and $S$ be as stated immediately before Proposition \ref{EqualPolyhedron}. We have
$$\sn_\kappa \frac a 2 \ge \sn_\kappa r \sin\frac \alpha 2, $$
with equality iff the convex hull of $S$ is isometric to a regular polytope of radius $r$ in $M^k_\kappa$.
\end{theorem}
The inequality is proved in Proposition \ref{EqualPolyhedron}, so we only have to show the equality case of the theorem. To do that we need the following results. The following lemma can be proved in essentially the same way as Corollary \ref{polyineq} and so the proof is omitted.
\begin{lemma}\label{kappapoly}
Let $X$ be a complete CAT($\kappa$) space, $g$ an isometry of order $n\ge 4$ on $X$. For any point $x \in X$ not fixed by $g$ and such that $\{g^i\cdot x\}_{i=1}^n$ has radius less than $\pi/\sqrt{\kappa}$ if $\kappa >0$,
$$\sn_\kappa (d(g^2\cdot x, x)/2) \le 2\cos(\pi/n)\sn_\kappa (d(g\cdot x,x)/2).$$
\end{lemma}
The lemma below is a substitute for the CAT(0) parallelogram inequality in CAT($\kappa$) space.
\begin{lemma}\label{kapparhombus}
Let $(x_1,x_2,x_3,x_4)$ be a rhombus in a complete CAT($\kappa$) space $X$, i.e. all edges $(x_i,x_{i+1})$ have equal length, and assume that its radius is less than $\pi/(2\sqrt\kappa)$ if $\kappa > 0$. Then
$$\sn_\kappa (d(x_1,x_3)/2)\sn_\kappa (d(x_2,x_4)/2) \le 2 \sn_\kappa^2 (d(x_1,x_2)/2).$$
\end{lemma}

\begin{proof}
Consider the comparison triangles $\triangle(\bar{x_1},\bar{x_2},  \bar{x_3})$ in $M^2_\kappa$. Let $\bar m$ be the midpoint of $[\bar{x_1},  \bar{x_3}]$. Let $a=d(\bar{x_1},  \bar{x_2})$, $b =d(\bar{x_1},  \bar m)$, $c=d(\bar{x_2},\bar m)$. Applying Lemma  \ref{cosinelaw} to the two triangles $\triangle(\bar{x_1},\bar{x_2},  \bar{m})$ and $\triangle(\bar{x_2},  \bar{x_3},\bar{m})$,
$$\sn_\kappa^2 \frac a 2 = \sn_\kappa^2 \frac {b-c} 2 + \sn_\kappa  b \sn_\kappa   c  \sin^2 (\frac 1 2 \angle_{\bar m}(\bar{x_1},\bar{x_2})),$$
$$\sn_\kappa^2 \frac a 2 = \sn_\kappa^2 \frac {b-c} 2 + \sn_\kappa  b \sn_\kappa   c \sin^2 (\frac 1 2 \angle_{\bar m}(\bar{x_2},\bar{x_3}))$$
Summing up, noting that  $\angle_{\bar m}(\bar{x_1},\bar{x_2}) + \angle_{\bar m}(\bar{x_2},\bar{x_3}) = \pi$,
$$ 2 \sn_\kappa^2 \frac a 2 = 2 \sn_\kappa^2 \frac {b-c} 4 + \\sn_\kappa  b \sn_\kappa   c \ge \sn_\kappa  b \sn_\kappa   c.$$
Now $d(x_1,x_3) =2b$, $d(x_2,x_4) \le d(x_2,m)+d(m,x_4) \le 2 d(\bar {x_2},\bar m) = 2c$, so we have the result.
\end{proof}

\begin{proof}[Proof of Theorem \ref{EqualPolyhedronIneq}]
When equality holds, we have $\angle_c(x_i,x_j)=\angle_{\bar c}(\bar{x_i},\bar{x_j})$ for all $x_i,x_j\in S$ , so applying Lemma  \ref{cosinelaw} to $\triangle(c,x_i,x_j)$,
$$\sn_\kappa (\frac 1 2 d(x_i,x_j)) \ge \sn_\kappa r \sin  (\frac 1 2\angle_{\bar c}(\bar{x_i},\bar{x_j})) = \frac{\sn_\kappa( a/2)}{\sin(\alpha/2)}\sin  (\frac 1 2\angle_{\bar c}(\bar{x_i},\bar{x_j}))$$
We will now show  the reverse inequality of the above, which  will imply an equality, then by Proposition \ref{EqualPolyhedron} we have the result.

\textit{$W$ is a $k$-dimensional orthoplex.} For any pair of points $x_i,x_j$ in $S$ that corresponds to an antipodal pair of vertices in $W$,
$$\sn_\kappa \frac a 2 =\sn_\kappa r \sin\frac \alpha 2 \ge \sn_\kappa (\frac 1 2 d(x_i,x_j)) \sin\frac \alpha 2.$$
(Likewise we have similar inequalities in the other cases of $W$ for pairs of points which corresponds to  antipodal pairs of vertices, and we will not repeat.)

\textit{$W$ is a $k$-dimensional hypercube.} For each $2\le m\le k$, consider the set $S_m$ that corresponds to a $m$-dimensional hypercube in $W$. This set is invariant under the $m$-dimensional hypercube symmetry group action, so applying Theorem \ref{anglepolyhedra} and Lemma \ref{cosinelaw} to the triangle formed by an edge in $S_m$ and the circumcenter of $S_m$,
$$\sn_\kappa \frac a 2 \ge \sn_\kappa r_m \sin(\arccos(1-2/m)/2) = \frac 1 {\sqrt m} \sn_\kappa r_m \ge  \frac 1 {\sqrt m} \sn_\kappa (\frac {a_m} 2),$$
where $r_m$ is the radius of $S_m$.

\textit{$W$ is an icosahedron.}
Refer to Figure \ref{ico2}. Let $a=d(x_{h_1},x_{h_2})$ and $b=d(x_{h_1},x_{h_3})$. Since the pentagon $(x_{h_1},x_{h_2},x_{h_3},x_{h_4},x_{h_5})$ is stabilized by an isometry of order 5, applying Lemma \ref{kappapoly}
$$\sn_\kappa \frac b 2 \le 2 \cos(\frac\pi 5)\sn_\kappa \frac a 2 = \sin\frac{\angle_{\bar c}(\bar{x_{h_1}},\bar{x_{h_3}})}{2} \sn_\kappa r .$$

\textit{$W$ is a dodecahedron.}
Refer to Figure \ref{dodec2}. Let $a=d(x_{h_1},x_{h_2})$ and $b_1=d(x_{h_1},x_{h_3})$. With the same reason as above we apply  Lemma \ref{kappapoly} to get
$$\sn_\kappa \frac {b_1} 2 \le 2 \cos(\frac\pi 5)\sn_\kappa \frac a 2 = \sin\frac{\angle_{\bar c}(\bar{x_{h_1}},\bar{x_{h_3}})}{2} \sn_\kappa r.$$
Next, refer to Figure \ref{dodec3}. Let $b_2=d(x_{i_1},x_{i_3})$. Again as before we obtain
$$\sn_\kappa \frac {b_2} 2 \le 2 \cos(\frac\pi 5)\sn_\kappa \frac {b_1} 2 = \sin\frac{\angle_{\bar c}(\bar{x_{i_1}},\bar{x_{i_3}})}{2}\sn_\kappa r,$$
where we have used the equality for $b_1$ and $r$.
Finally, refer to Figure \ref{dodec4}. Let $b_3=d(x_{j_0},x_{j_2})$, $b_4=d(x_{j_1},x_{j_3})$. We know already that
$$\sn_\kappa \frac {b_1} 2 = \sn_\kappa r \sin\frac{\angle_{\bar c}(\bar{x_{j_1}},\bar{x_{j_0}})}{2},$$
and we also know that
\begin{equation}\label{eq:b3b4}
\begin{aligned}
\sn_\kappa \frac {b_3} 2 &\ge \sn_\kappa r \sin\frac{\angle_{\bar c}(\bar{x_{j_0}},\bar{x_{j_2}})}{2},\\
\sn_\kappa \frac {b_4} 2 &\ge \sn_\kappa r \sin\frac{\angle_{\bar c}(\bar{x_{j_1}},\bar{x_{j_3}})}{2}.
\end{aligned}
\end{equation}
From Lemma \ref{kapparhombus},
$$\sn_\kappa \frac {b_3} 2 \sn_\kappa \frac {b_4} 2  \le 2\sn_\kappa^2 \frac {b_1} 2.$$
But
$$2\sn_\kappa^2 \frac {b_1} 2 = \sn_\kappa^2 r \sin\frac{\angle_{\bar c}(\bar{x_{j_0}},\bar{x_{j_2}})}{2}\sin\frac{\angle_{\bar c}(\bar{x_{j_1}},\bar{x_{j_3}})}{2},$$
so the inequalities \ref{eq:b3b4} are actually equalities.
\end{proof}
\bibliographystyle{alpha}
\bibliography{reference}
\end{document}